\newcommand\blfootnote[1]{%
  \begingroup
  \renewcommand\thefootnote{}\footnote{#1}%
  \addtocounter{footnote}{-1}%
  \endgroup}
\newtheorem{theorem}{Theorem}
\newtheorem{definition}[theorem]{Definition}
\newtheorem{lemma}[theorem]{Lemma}
\newtheorem{proposition}[theorem]{Proposition}
\newtheorem{remark}[theorem]{Remark}
\DeclareMathOperator\supp{supp}
\newcommand\esssup{\mathop{\rm ess \, sup}}
\newcommand\essinf{\mathop{\rm ess \, inf}}
\begin{document}

\title{Variable Calder\'on-Hardy spaces on the Heisenberg group}
\author{Pablo Rocha}

\maketitle

\begin{abstract}
Let $\mathbb{H}^{n}$ be the Heisenberg group and $Q = 2n+2$. For $1 < q < \infty$, $\gamma > 0$ and an exponent function $p(\cdot)$ on 
$\mathbb{H}^n$, which satisfy log-H\"older conditions, with $0 < p_{-} \leq p_{+} < \infty$, we introduce the variable 
Calder\'on-Hardy spaces $\mathcal{H}^{p(\cdot)}_{q, \gamma}(\mathbb{H}^{n})$, and show for every $f \in H^{p(\cdot)}(\mathbb{H}^{n})$ that the equation
\[
\mathcal{L} F = f
\]
has a unique solution $F$ in $\mathcal{H}^{p(\cdot)}_{q, 2}(\mathbb{H}^{n})$, where $\mathcal{L}$ is the sublaplacian on $\mathbb{H}^{n}$, 
$1 < q < \frac{n+1}{n}$ and $Q (2 + \frac{Q}{q})^{-1} < \underline{p}$.
\end{abstract}

\blfootnote{{\bf Keywords}: Variable Calder\'on-Hardy spaces, variable Hardy spaces, atomic decomposition, Heisenberg group, sublaplacian. \\
{\bf 2020 Mathematics Subject Classification:} 42B25, 42B30, 42B35, 43A80}

\section{Introduction}

The  Heisenberg group $\mathbb{H}^{n}$ can be identified with $\mathbb{R}^{2n} \times \mathbb{R}$ whose group law 
(noncommutative) is given by
\begin{equation} \label{group law}
(x,t) \cdot (y,s) = \left( x+y, t+s + x^{t} J y \right),
\end{equation}
where $J$ is the $2n \times 2n$ skew-symmetric matrix given by
\[
J= 2 \left( \begin{array}{cc}
                           0 & -I_n \\
                           I_n & 0 \\
                                      \end{array} \right)
\]
being $I_n$ the $n \times n$ identity matrix.

The dilation group on $\mathbb{H}^n$ is defined by 
\[
r \cdot (x,t) = (rx, r^{2}t), \,\,\,\ r > 0.
\]
With this structure we have that $e = (0,0)$ is the neutral element, $(x, t)^{-1}=(-x, -t)$ is the inverse of $(x, t)$, and
$r \cdot((x,t) \cdot (y,s)) = (r\cdot(x,y)) \cdot (r\cdot(y,s))$. 

The {\it Koranyi norm} on $\mathbb{H}^{n}$ is the function $\rho : \mathbb{H}^{n} \to [0, \infty)$ defined by
\begin{equation} \label{Koranyi norm}
\rho(x,t) = \left( |x|^{4} + \, t^{2}  \right)^{1/4}, \,\,\, (x,t) \in \mathbb{H}^{n},
\end{equation}
where $| \cdot |$ is the usual Euclidean norm on $\mathbb{R}^{2n}$. Moreover, $\rho$ is continuous on $\mathbb{H}^{n}$ and is smooth on 
$\mathbb{H}^{n} \setminus \{ e \}$. 

The $\rho$ - ball centered at $z_0 \in \mathbb{H}^{n}$ with radius $\delta > 0$ is defined by
\[
B(z_0, \delta) := \{ w \in \mathbb{H}^{n} : \rho(z_0^{-1} \cdot w) < \delta \}.
\]

The topology in $\mathbb{H}^{n}$ induced by the $\rho$ - balls coincides with the Euclidean topology of 
$\mathbb{R}^{2n} \times \mathbb{R} \equiv\mathbb{R}^{2n+1}$ (see \cite[Proposition 3.1.37]{Fischer}). So, the borelian sets of 
$\mathbb{H}^{n}$ are identified with those of $\mathbb{R}^{2n+1}$. The Haar measure in $\mathbb{H}^{n}$ is the Lebesgue measure of 
$\mathbb{R}^{2n+1}$, thus $L^{p}(\mathbb{H}^{n}) \equiv L^{p}(\mathbb{R}^{2n+1})$, for every $0 < p \leq \infty$. Moreover, for 
$f \in L^{1}(\mathbb{H}^{n})$ and for $r > 0$ fixed, we have
\begin{equation} \label{homog dim}
\int_{\mathbb{H}^{n}} f(r \cdot z) \, dz = r^{-Q} \int_{\mathbb{H}^{n}} f(z) \, dz,
\end{equation}
where $Q= 2n+2$. The number $2n+2$ is known as the {\it homogeneous dimension} of $\mathbb{H}^{n}$ (we observe that the {\it topological dimension} of $\mathbb{H}^{n}$ is $2n+1$).

If $f$ and $g$ are measurable functions on $\mathbb{H}^{n}$, their convolution $f \ast g$ is defined by
\[
(f \ast g)(z) := \int_{\mathbb{H}^{n}} f(w) g(w^{-1} \cdot z) \, dw,
\]
when the integral is finite.

A measurable function $p(\cdot) : \mathbb{H}^{n} \rightarrow (0, \infty)$ is called an exponent on $\mathbb{H}^n$, we adopt the standard notation in variable exponents. Given a measurable set $E \subset \mathbb{H}^{n}$, let
\[
p_{-}(E) = \essinf_{ z \in E } p(z), \,\,\,\, \text{and} \,\,\,\, p_{+}(E) = \esssup_{z \in E} p(z).
\]
When $E = \mathbb{H}^{n}$, we will simply write $p_{-} := p_{-}(\mathbb{H}^{n})$ and $p_{+} := p_{+}(\mathbb{H})$.
Throughout this paper, we will assume that $0 < p_{-} \leq p_{+} < \infty$. We also define $\underline{p}=\min \left\{ p_{-},1\right\}$.

Given an exponent $p(\cdot): \mathbb{H}^{n} \rightarrow (0, \infty)$, on the set of the all measurable function $f$, we define the modular function $\kappa_{p(\cdot)}$ by
\begin{equation} \label{mod}
\kappa_{p(\cdot)}(f) = \int_{\mathbb{H}^{n}} \, |f(z)|^{p(z)} \, dz.
\end{equation}
By $L^{p(\cdot)}(\mathbb{H}^{n})$ we denote the space of all measurable functions $f$ 
on $\mathbb{H}^n$ such that for some $\lambda > 0$,
\[
\kappa_{p(\cdot)}(f/\lambda) < \infty.
\] 
We set 
\[
\| f \|_{p(\cdot)} = \inf \left\{ \lambda > 0 : \kappa_{p(\cdot)}(f/\lambda) \leq 1 \right\}.
\]
We see that $\left( L^{p(\cdot)}(\mathbb{H}^{n}), \| . \|_{p(\cdot)} \right)$ is a quasi normed space.
These spaces are referred to as the Lebesgue spaces with variable exponents or variable Lebesgue spaces (see \cite{diening}).

In \cite{Foll-St}, for $0 < p < \infty$, G. Folland and E. M. Stein defined the Hardy Spaces $H^{p}(\mathbb{H}^{n})$ on the Heisenberg group with the norm given by
\[
\| f \|_{H^{p}(\mathbb{H}^{n})} = \left\| \sup_{t>0} \sup_{\phi \in \mathcal{F}_{N}} \left| f \ast \phi_t \right| \right\|_{p},
\]
where $\phi_t(z) = t^{-Q} \phi(t^{-1} \cdot z)$ with $t > 0$ and $\mathcal{F}_{N}$ is a suitable family of smooth functions. In the paper \cite{Fang}, J. Fang and J. Zhao defined the variable Hardy spaces on the Heisenberg group $H^{p(\cdot)}(\mathbb{H}^{n})$, replacing $L^{p}$ by $L^{p(\cdot)}$ in the above norm and they investigate their several properties.

Let $L^{q}_{loc}(\mathbb{H}^{n})$, $1 < q < \infty$, be the space of all measurable functions $g$ on $\mathbb{H}^{n}$ that belong locally to $L^{q}$ for compact sets of $\mathbb{H}^{n}$. We endowed $L^{q}_{loc}(\mathbb{H}^{n})$ with the topology generated by the seminorms
$$|g|_{q, \, B} = \left( |B|^{-1} \int_{B} \, |g(w)|^{q}\, dw \right)^{1/q},$$ where $B$ is a $\rho$-ball in $\mathbb{H}^{n}$ and $|B|$ denotes its Haar measure.

For $g \in L^{q}_{loc}(\mathbb{H}^{n})$, we define a maximal function $\eta_{q, \, \gamma}(g; z)$ as
\begin{equation} \label{etamax}
\eta_{q, \, \gamma}(g; \, z) = \sup_{r > 0} r^{-\gamma} |g|_{q, \, B(z, r)},
\end{equation}
where $\gamma$ is a positive real number and $B(z, r)$ is the $\rho$-ball centered at $z$ with radius $r$.

Let $k$ a non negative integer and $\mathcal{P}_{k}$ the subspace of $L^{q}_{loc}(\mathbb{H}^{n})$ formed by all the polynomials of homogeneous degree at most $k$. We denote by $E^{q}_{k}$ the quotient space of $L^{q}_{loc}(\mathbb{H}^{n})$ by $\mathcal{P}_{k}$. If 
$G \in E^{q}_{k}$, we define the seminorm $\| G \|_{q, \, B} = \inf \left\{ |g|_{q, \, B} : g \in G \right\}$. The family of all these seminorms induces on $E^{q}_{k}$ the quotient topology.

Given a positive real number $\gamma$, we can write $\gamma = k + t$, where $k$ is a non negative integer and $0 < t \leq 1$. This decomposition is unique.

For $G \in E^{q}_{k}$, we define a maximal function $N_{q, \, \gamma}(G; z)$ as 
\begin{equation} \label{Nmax}
N_{q, \, \gamma}(G; z) = \inf \left\{ \eta_{q, \, \gamma}(g; z) : g \in G \right\},
\end{equation}
such maximal function is lower semicontinuous, see \cite[Lemma 9]{rocha5}.

\begin{definition} \label{var cal-hardy}
Let $p(\cdot) : \mathbb{H}^{n} \rightarrow (0, \infty)$ be an exponent such that $0< p_{-} \leq p_{+} < \infty$, we say 
that an element $G \in E^{q}_{k}$ belongs to the variable Calder\'on-Hardy space on the Heisenberg group 
$\mathcal{H}^{p(\cdot)}_{q, \, \gamma}(\mathbb{H}^{n})$ if the maximal function 
$N_{q, \, \gamma}(G; \cdot) \in L^{p(\cdot)}(\mathbb{H}^{n})$. The "norm" of $G$ in $\mathcal{H}^{p(\cdot)}_{q, \, \gamma}(\mathbb{H}^{n})$ is defined as $\| G \|_{\mathcal{H}^{p(\cdot)}_{q, \, \gamma}(\mathbb{H}^{n})} = \| N_{q, \, \gamma}(G; \cdot) \|_{p(\cdot)}$.
\end{definition}

The Calder\'on-Hardy spaces were defined in the setting of the classical Lebesgue spaces by A. B. Gatto, J. G. Jim\'enez and C. Segovia in \cite{segovia}, they characterize the solution of the equation $\Delta^{m} F = f$, $m \in \mathbb{N}$, for $f \in H^{p}(\mathbb{R}^{n})$. Moreover, they proved that the iterated Laplace operator $\Delta^{m}$ is a bijective mapping from the Calder\'on-Hardy spaces onto $H^{p}(\mathbb{R}^{n})$. 

The equation $\Delta^{m} F = f$, $m \in \mathbb{N}$, for $f \in H^{p(\cdot)}(\mathbb{R}^{n})$ and $f \in H^{p}(\mathbb{R}^{n}, w)$, 
was studied by the author in \cite{rocha1} and \cite{rocha3} respectively, obtaining analogous results to those of Gatto, Jim\'enez and Segovia. %(see also \cite{rocha4}).

Lately, Z. Liu, Z. He and H. Mo in \cite{He} extended the definition of Calder\'on-Hardy spaces to Orlicz setting. These new Orlicz 
Calder\'on-Hardy spaces can cover classical Calder\'on-Hardy spaces in \cite{segovia}. As an application, they solved the equation 
$\Delta^{m} F = f$ when $f \in H^{\Phi}(\mathbb{R}^{n})$, where $H^{\Phi}(\mathbb{R}^{n})$ are the Orlicz-Hardy spaces defined 
in \cite{Nakai}.

Recently, the author in \cite{rocha5} studied an analogous problem on the Heisenberg group. More precisely, we proved that the equation
$\mathcal{L} F = f$ for $f \in H^p(\mathbb{H}^n)$ has a unique solution $F$ in $\mathcal{H}^{p}_{q, 2}(\mathbb{H}^{n})$, where 
$\mathcal{L}$ is the sublaplacian on $\mathbb{H}^{n}$, $1 < q < \frac{n+1}{n}$ and $Q (2 + \frac{Q}{q})^{-1} < p \leq 1$.

The purpose of this work is to extend the results obtained by the author in \cite{rocha5} to the variable setting. For them, we must take into account certain aspects inherent to the variable spaces.

We say that an exponent function $p(\cdot) : \mathbb{H}^{n} \to (0, \infty)$ such that $0 < p_{-} \leq p_{+} < \infty$ belongs 
to $\mathcal{P}^{\log}(\mathbb{H}^{n})$, if there exist positive constants $C$, $C_{\infty}$ and $p_{\infty}$ such that $p(\cdot)$ satisfies the local log-H\"older continuity condition, i.e.:
\[
|p(z) - p(w)| \leq \frac{C}{-\log(\rho(z^{-1} \cdot w))}, \,\,\, \text{for} \,\, \rho(z^{-1} \cdot w) \leq \frac{1}{2},
\]
and is log-H\"older continuous at infinity, i.e.:
\[
|p(z) - p_{\infty}| \leq \frac{C_{\infty}}{\log(e+\rho(z))}, \,\,\, \text{for all} \,\, z \in \mathbb{H}^{n}.
\]
Here $\rho$ is the {\it Koranyi norm} given by (\ref{Koranyi norm}).

Let $G \in \mathcal{H}^{p(\cdot)}_{q, \, 2}(\mathbb{H}^{n})$, then $N_{q, \, 2}(G; z_0) < \infty$ $\text{a.e.} \,\, z_0 \in \mathbb{H}^{n}$. 
By Lemma \ref{puntual 1}-(i) below, there exists $g \in G$ such that $N_{q, \, 2}(G; z_0) = \eta_{q, \, 2}(g; z_0)$. Now, from 
Proposition \ref{g distrib} below it follows that $g  \in \mathcal{S}'(\mathbb{H}^{n})$. So the sublaplacian of $g$, $\mathcal{L}g$, is well defined in sense of distributions. On the other hand, since any two representatives of $G$ differ in a polynomial of homogeneous degree at most $1$, we have that $\mathcal{L}g$ is independent of the representative $g \in G$ chosen. Therefore, for $G \in E^{q}_{1}$, we shall define 
$\mathcal{L}G$ as the distribution $\mathcal{L}g$, where $g$ is any representative of $G$.

Our main results are the following.

\

{\bf Theorem \ref{main thm}.} \textit{Let $p(\cdot)$ be an exponent that belongs to $\mathcal{P}^{\log}(\mathbb{H}^{n})$ and 
$1 < q < \frac{n+1}{n}$. If $\underline{p} > Q (2 + Q/q)^{-1}$, then the sublaplacian $\mathcal{L}$ is a bijective mapping from 
$\mathcal{H}^{p(\cdot)}_{q, 2}(\mathbb{H}^{n})$ onto $H^{p(\cdot)}(\mathbb{H}^{n})$. Moreover, there exist two positive constants 
$c_1$ and $c_2$ such that
\[
c_1 \|G \|_{\mathcal{H}^{p(\cdot)}_{q, 2}(\mathbb{H}^{n})} \leq \| \mathcal{L}G \|_{H^{p(\cdot)}(\mathbb{H}^{n})} \leq 
c_2 \|G \|_{\mathcal{H}^{p(\cdot)}_{q, 2}(\mathbb{H}^{n})}
\]
hold for all $G \in \mathcal{H}^{p(\cdot)}_{q, 2}(\mathbb{H}^{n})$.}

\

The case $p_{+} \leq Q (2 + Q/q)^{-1}$ is trivial.

\

{\bf Theorem \ref{second thm}} \textit{If $p(\cdot)$ is an exponent function on $\mathbb{H}^n$ such that $p_{+} \leq Q (2 + Q/q)^{-1}$, then 
$\mathcal{H}^{p(\cdot)}_{q, \, 2}(\mathbb{H}^{n}) = \{ 0 \}.$}

\

In Section 2 we state the basics of the Heisenberg group and variable Lebesgue spaces together with some auxiliary lemmas and propositions related to variable Calder\'on-Hardy and variable Hardy spaces on the Heisenberg group. We also recall the definition and atomic decomposition of variable Hardy spaces on $\mathbb{H}^n$ given in \cite{Fang}. Finally, in Section 3 we prove our main results.

\

\textbf{Notation:} The symbol $A \lesssim B$ stands for the inequality $A \leq c B$ for some constant $c$, and $A \sim B$ stands for 
$B \lesssim A \lesssim B$. We denote by $B(z_0, \delta)$ the $\rho$ - ball centered at $z_0 \in \mathbb{H}^{n}$ with radius $\delta$. 
Given $\beta > 0$ and a $\rho$ - ball $B= B(z_0, \delta)$, we set $\beta B= B(z_0, \beta \delta)$. For a measurable subset 
$E\subseteq \mathbb{H}^{n}$ we denote by $\left\vert E\right\vert $ and $\chi_{E}$ the Haar measure of $E$ and the characteristic 
function of $E$ respectively. 

Throughout this paper, $C$ will denote a positive constant, not necessarily the same at each occurrence.

\section{Preliminaries}

\subsection{Basics on Heisenberg group}

Let $\mathbb{H}^n$ be the Heisenberg group with group law given by (\ref{group law}). 
If $B(z_0, \delta)$ is a $\rho$ - ball of $\mathbb{H}^n$, then its Haar measure is
\[
|B(z_0, \delta)| = c \delta^{Q},
\]
where $c = |B(e,1)|$ and  $Q = 2n+2$. Given $\lambda > 0$, we put $\lambda B = \lambda B(z_0, \delta) = 
B(z_0, \lambda \delta)$. So $|\lambda B| = \lambda^{Q}	|B|$.

The Hardy-Littlewood maximal operator $M$ on the Heisenberg group is defined by
\[
Mf(z) = \sup_{B \ni z} |B|^{-1}\int_{B} |f(w)| \, dw,
\]
where $f$ is a locally integrable function on $\mathbb{H}^{n}$ and the supremum is taken over all the $\rho$ - balls $B$ containing $z$.

For every $i = 1,2, ..., 2n+1$, $X_i$ denotes the left invariant vector field on $\mathbb{H}^n$ given by
\[
X_i = \frac{\partial}{\partial x_i} + 2 x_{i+n} \frac{\partial}{\partial t}, \,\,\,\, i=1, 2, ..., n;
\]
\[
X_{i+n} = \frac{\partial}{\partial x_{i+n}} - 2 x_{i} \frac{\partial}{\partial t}, \,\,\, i=1, 2, ..., n;
\]
and
\[
X_{2n+1} = \frac{\partial}{\partial t}.
\]
The sublaplacian on $\mathbb{H}^n$, denoted by $\mathcal{L}$, is the counterpart of the Laplacain $\Delta$ on $\mathbb{R}^n$. The sublaplacian $\mathcal{L}$ is defined by
\[
\mathcal{L} =- \sum_{i=1}^{2n} X_i^2,
\]
where $X_i$, $i= 1, ..., 2n$, are the left invariant vector fields defined above.

Given a multi-index $I=(i_1,i_2, ..., i_{2n}, i_{2n+1}) \in (\mathbb{N} \cup \{ 0 \})^{2n+1}$, we set
\[
|I| = i_1 + i_2 + \cdot \cdot \cdot + i_{2n} + i_{2n+1}, \hspace{.5cm} d(I) = i_1 + i_2 + \cdot \cdot \cdot + i_{2n} + 2 \, i_{2n+1}.
\]
The amount $|I|$ is called the length of $I$ and $d(I)$ the homogeneous degree of $I$. We adopt the following multi-index notation for
higher order derivatives and for monomials on $\mathbb{H}^{n}$. If $I=(i_1, i_2, ..., i_{2n+1})$ is a multi-index, 
$X = \{ X_i \}_{i=1}^{2n+1}$, and $z = (x,t) = (x_1, ..., x_{2n}, t) \in \mathbb{H}^{n}$, we put
\[
X^{I} := X_{1}^{i_1} X_{2}^{i_2} \cdot \cdot \cdot X_{2n+1}^{i_{2n+1}}, \,\,\,\,\,\, \text{and} \,\,\,\,\,\,
z^{I} := x_{1}^{i_1} \cdot \cdot \cdot x_{2n}^{i_{2n}} \cdot t^{i_{2n+1}}.
\]
Every polynomial $p$ on $\mathbb{H}^n$ can be written as a unique finite linear combination of the monomials $z^I$, that is
\begin{equation} \label{polynomial}
p(z) = \sum_{I \in \mathbb{N}_{0}^n} c_I z^I,
\end{equation}
where all but finitely many of the coefficients $c_I \in \mathbb{C}$ vanish. The \textit{homogeneous degree} of a polynomial $p$ written as 
(\ref{polynomial}) is $\max \{ d(I) : I \in \mathbb{N}_{0}^n \,\, \text{with} \,\, c_I \neq 0 \}$. Let $k \in \mathbb{N} \cup \{ 0 \}$, we 
remember that $\mathcal{P}_{k}$ denotes the subspace formed by all the polynomials of homogeneous degree at most $k$. So, every 
$p \in \mathcal{P}_{k}$ can be written as $p(z) = \sum_{d(I) \leq k} c_I \, z^I$, with $c_I \in \mathbb{C}$.

The Schwartz space $\mathcal{S}(\mathbb{H}^{n})$ is defined by
\[
\mathcal{S}(\mathbb{H}^{n}) = \left\{ \phi \in C^{\infty}(\mathbb{H}^{n}) : \sup_{z \in \mathbb{H}^{n}} (1+\rho(z))^{N} 
|(X^{I} \phi)(z)| < \infty \,\,\, \forall \,\, N \in \mathbb{N}_{0}, \, I \in (\mathbb{N}_{0})^{2n+1}  \right\}.
\]
We topologize the space $\mathcal{S}(\mathbb{H}^{n})$ with the following family of seminorms
\[
\| \phi \|_{\mathcal{S}(\mathbb{H}^{n}), \, N} = \sum_{d(I) \leq N} \sup_{z \in \mathbb{H}^{n}} (1+\rho(z))^{N} |(X^{I} \phi)(z)| \,\,\,\,\,\,\, (N \in \mathbb{N}_{0}),
\]
with $\mathcal{S}'(\mathbb{H}^{n})$ we denote the dual space of $\mathcal{S}(\mathbb{H}^{n})$. 

\subsection{Basics on variable Lebesgue spaces}

Given an exponent $p(\cdot): \mathbb{H}^{n} \rightarrow (0, \infty)$, we consider the variable Lebesgue space $L^{p(\cdot)}(\mathbb{H}^{n})$ defined above. It not so hard to see the following,

\

$1.$ $\| f \|_{p(\cdot)} \geq 0$, and $\| f \|_{p(\cdot)}=0$ if and only if $f \equiv 0$.

$2.$ $\| c \, f \|_{p(\cdot)} = |c| \, \| f \|_{p(\cdot)}$ for $c \in \mathbb{C}$.

$3.$ $\| f + g \|_{p(\cdot)}^{\underline{p}} \leq \| f \|_{p(\cdot)}^{\underline{p}} + \| g \|_{p(\cdot)}^{\underline{p}}$. 

$4.$ $\| f \|_{p(\cdot)}^s = \| |f|^s \|_{p(\cdot)/s}$, for every $s > 0$. \\
$\,\,$ \\
A direct consequence of $\underline{p}$-triangle inequality is the quasi-triangle inequality
$$\| f + g \|_{p(\cdot)} \leq 2^{1/\underline{p} - 1}\left(\| f \|_{p(\cdot)} + \| g \|_{p(\cdot)}\right),$$
for all $f, \, g \in L^{p(\cdot)}(\mathbb{H}^{n})$.

\

The following Fefferman-Stein vector-valued maximal inequality on $L^{p(\cdot)}(\mathbb{H}^{n})$ was proved in \cite{Fang}.

\begin{theorem} (\cite[Theorem 4.2]{Fang}) \label{vector max ineq}
Let $p(\cdot) \in \mathcal{P}^{\log}(\mathbb{H}^n)$ with $1 < p_{-} \leq p_{+} < \infty$. Then for every $\theta \in
\left( 1,\infty \right)$, we have
\[
\left\Vert \left( \sum\limits_{j=1}^{\infty }\left( M f_{j}\right)
^{\theta }\right) ^{\frac{1}{\theta }}\right\Vert_{L^{p(\cdot)}(\mathbb{H}^{n})} \lesssim \left\Vert
\left( \sum\limits_{j=1}^{\infty }\left\vert f_{j}\right\vert ^{\theta
}\right) ^{\frac{1}{\theta }}\right\Vert_{L^{p(\cdot)}(\mathbb{H}^{n})},
\]%
for all sequences of bounded measurable functions with compact support $\left\{ f_{j}\right\}
_{j=1}^{\infty}$.
\end{theorem}

The following two results refer to the modular function $\kappa_{p(\cdot)}$ given by (\ref{mod}).

\begin{lemma} \label{modular} Let $p(\cdot)$ be an exponent on $\mathbb{H}^n$ such that $0 < p_{-} \leq p_{+} < \infty$. Then 
$f \in L^{p(\cdot)}(\mathbb{H}^{n})$ if and only if $\kappa_{p(\cdot)}(f) < \infty$.
\end{lemma}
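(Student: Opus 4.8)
The plan is to prove Lemma \ref{modular} directly from the definitions of $\| \cdot \|_{p(\cdot)}$ and $\kappa_{p(\cdot)}$, treating the two implications separately. For the forward direction, suppose $f \in L^{p(\cdot)}(\mathbb{H}^{n})$; if $f \equiv 0$ there is nothing to prove, so assume $f \neq 0$. Then $\lambda_0 := \|f\|_{p(\cdot)} \in (0,\infty)$, and I would invoke the identity recalled just before the lemma, namely $\kappa_{p(\cdot)}(\lambda_0^{-1} f) = 1$. The point is to pass from this normalized statement back to $\kappa_{p(\cdot)}(f)$ itself. If $\lambda_0 \leq 1$ this is immediate since $|f(z)|^{p(z)} \leq (|f(z)|/\lambda_0)^{p(z)}$ pointwise, giving $\kappa_{p(\cdot)}(f) \leq 1 < \infty$. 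If $\lambda_0 > 1$, I would instead estimate $|f(z)|^{p(z)} = \lambda_0^{p(z)} (|f(z)|/\lambda_0)^{p(z)} \leq \lambda_0^{p_{+}} (|f(z)|/\lambda_0)^{p(z)}$, using $\lambda_0 > 1$ and $p(z) \leq p_{+}$, so that $\kappa_{p(\cdot)}(f) \leq \lambda_0^{p_{+}} < \infty$. This uses $p_{+} < \infty$ crucially.

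For the converse, suppose $\kappa_{p(\cdot)}(f) < \infty$. If $\kappa_{p(\cdot)}(f) \leq 1$, then $\lambda = 1$ is admissible in the infimum defining $\|f\|_{p(\cdot)}$, so $\|f\|_{p(\cdot)} \leq 1 < \infty$ and $f \in L^{p(\cdot)}(\mathbb{H}^{n})$. If $\kappa_{p(\cdot)}(f) > 1$, pick $\lambda > 1$ and estimate $\int_{\mathbb{H}^{n}} (|f(z)|/\lambda)^{p(z)}\, dz = \int_{\mathbb{H}^{n}} \lambda^{-p(z)} |f(z)|^{p(z)}\, dz \leq \lambda^{-p_{-}} \kappa_{p(\cdot)}(f)$, again using $\lambda > 1$ together with $p(z) \geq p_{-}$. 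Choosing $\lambda$ large enough that $\lambda^{-p_{-}} \kappa_{p(\cdot)}(f) \leq 1$, which is possible since $p_{-} > 0$, we conclude $\|f\|_{p(\cdot)} < \infty$, hence $f \in L^{p(\cdot)}(\mathbb{H}^{n})$. There is no serious obstacle here; the only subtlety is remembering to split into the cases $\lambda_0 \leq 1$ versus $\lambda_0 > 1$ (and correspondingly $\kappa_{p(\cdot)}(f) \leq 1$ versus $> 1$) so that the bounds $p(z) \leq p_{+}$ and $p(z) \geq p_{-}$ can be applied in the right direction, and to note explicitly where the finiteness of $p_{+}$ and the positivity of $p_{-}$ enter. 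The statement is essentially a standard fact about Musielak--Orlicz norms adapted to this setting, so the proof is short and elementary.
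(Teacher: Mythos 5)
Your proof is correct and rests on the same elementary scaling estimates ($\kappa_{p(\cdot)}(f)\leq\lambda^{p_{+}}\kappa_{p(\cdot)}(f/\lambda)$ for $\lambda>1$, and its counterpart with $p_{-}$) that the paper uses; the only cosmetic difference is that you anchor the forward direction at the specific scale $\lambda_{0}=\|f\|_{p(\cdot)}$ via the normalization identity, whereas the paper works with an arbitrary $\lambda>1$ at which the modular is finite, and you spell out the ``clear'' converse direction in more detail by actually bounding the norm. No gaps.
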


\begin{proof} Clearly, if $\kappa_{p(\cdot)}(f) < \infty$, then $f\in L^{p(\cdot)}(\mathbb{H}^{n})$. Conversely, if 
$f\in L^{p(\cdot)}(\mathbb{H}^{n})$, then we have that $\kappa_{p(\cdot)}(f / \lambda) < \infty$ for some $\lambda >1$. Then
\[
\kappa_{p(\cdot)}(f)= \int_{\mathbb{H}^{n}} \, \left| \frac{\lambda f(z)}{\lambda} \right|^{p(z)} \, dz \leq 
\lambda^{p_{+}} \kappa_{p(\cdot)}(f / \lambda) < \infty.
\]
\end{proof}

\begin{lemma} \label{modular a cero} Let $p(\cdot)$ be an exponent on $\mathbb{H}^n$ such that $0 < p_{-} \leq p_{+} < \infty$. If $\{ f_j \}$ is a sequence of measurable functions on $\mathbb{H}^n$ such that $\kappa_{p(\cdot)}(f_j) \rightarrow 0$, then $\| f_j \|_{p(\cdot)} \rightarrow 0$.
\end{lemma}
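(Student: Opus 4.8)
The plan is to reduce the statement to its scalar counterpart by exploiting the homogeneity (property~2) and the power rule (property~4) for the quasi-norm $\|\cdot\|_{p(\cdot)}$, together with Lemma~\ref{modular a cero}'s hypothesis that the modulars $\kappa_{p(\cdot)}(f_j)$ tend to $0$. The key observation is that if $\kappa_{p(\cdot)}(f_j) \to 0$, then for $j$ large we have $\kappa_{p(\cdot)}(f_j) < 1$, and one should show that in this regime $\|f_j\|_{p(\cdot)}$ is controlled by a fixed power of $\kappa_{p(\cdot)}(f_j)$. Concretely, I would first fix $\varepsilon \in (0,1)$ and choose $J$ so that $\kappa_{p(\cdot)}(f_j) \leq \varepsilon^{p_{+}}$ for all $j \geq J$. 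I then want to conclude that $\|f_j\|_{p(\cdot)} \leq \varepsilon$ for such $j$, which by definition of the Luxemburg quasi-norm amounts to checking $\int_{\mathbb{H}^{n}} |f_j(z)/\varepsilon|^{p(z)}\, dz \leq 1$.

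The main step is the pointwise inequality: since $0 < \varepsilon < 1$ and $p(z) \geq p_{-} > 0$ but also, more usefully here, $|f_j(z)/\varepsilon|^{p(z)} = \varepsilon^{-p(z)} |f_j(z)|^{p(z)} \leq \varepsilon^{-p_{+}} |f_j(z)|^{p(z)}$ because $\varepsilon^{-1} > 1$ and $p(z) \leq p_{+}$. Integrating this over $\mathbb{H}^{n}$ gives
\[
\int_{\mathbb{H}^{n}} \left| \frac{f_j(z)}{\varepsilon} \right|^{p(z)} dz \leq \varepsilon^{-p_{+}} \kappa_{p(\cdot)}(f_j) \leq \varepsilon^{-p_{+}} \cdot \varepsilon^{p_{+}} = 1,
\]
for all $j \geq J$. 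Hence $\|f_j\|_{p(\cdot)} \leq \varepsilon$ for $j \geq J$. Since $\varepsilon \in (0,1)$ was arbitrary, this shows $\|f_j\|_{p(\cdot)} \to 0$, completing the proof.

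I expect there is essentially no serious obstacle here: the argument is a direct modular-to-norm comparison, entirely parallel to the one used in the proof of Lemma~\ref{modular} (where the same split $|\lambda f/\lambda|^{p(z)} \leq \lambda^{p_{+}} |f/\lambda|^{p(z)}$ appears). The only point requiring mild care is the direction of the exponent inequality: because we divide by $\varepsilon < 1$, it is the \emph{supremum} $p_{+}$, not $p_{-}$, that governs the bound $\varepsilon^{-p(z)} \leq \varepsilon^{-p_{+}}$; using $p_{-}$ would go the wrong way. One should also note that the case $f_j \equiv 0$ (or $\kappa_{p(\cdot)}(f_j)=0$) is trivial by property~1, so no division-by-zero issue arises, and the finiteness $\kappa_{p(\cdot)}(f_j) < \infty$ needed to invoke Lemma~\ref{modular} for membership in $L^{p(\cdot)}(\mathbb{H}^{n})$ is automatic once the modulars are eventually bounded.
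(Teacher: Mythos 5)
Your proof is correct and follows essentially the same route as the paper: both arguments rescale $f_j$ by a small parameter $\lambda\leq 1$ and use the bound $\lambda^{-p(z)}\leq\lambda^{-p_{+}}$ to show the modular of $f_j/\lambda$ is at most $1$, yielding $\|f_j\|_{p(\cdot)}\leq\kappa_{p(\cdot)}(f_j)^{1/p_{+}}$. The only cosmetic difference is that the paper takes $\lambda=\kappa_{p(\cdot)}(f_j)^{1/p_{+}}$ directly while you take $\lambda=\varepsilon$ with $\kappa_{p(\cdot)}(f_j)\leq\varepsilon^{p_{+}}$; these are the same estimate.
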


\begin{proof} Suppose that $\kappa_{p(\cdot)}(f_j) \rightarrow 0$. Given $0 < \epsilon < 1$ for sufficiently large $j$ we have 
$\kappa_{p(\cdot)}(f_j) \leq \epsilon$ and so
\[
\kappa_{p(\cdot)}\left(f_j \kappa_{p(\cdot)}(f_j)^{-1/p_{+}}\right) \leq \kappa_{p(\cdot)}(f_j)^{-1} \kappa_{p(\cdot)}(f_j) =1,
\]
from this it follows that $\| f_j \|_{p(\cdot)} \leq \kappa_{p(\kappa)}(f_j)^{1/p_{+}} \leq \epsilon^{1/p_{+}}$. Thus, 
$\| f_j \|_{p(\cdot)} \rightarrow 0$.
\end{proof}

\subsection{Variable Calder\'on-Hardy spaces on $\mathbb{H}^{n}$}

Let $1 < q < \infty$ and $\gamma > 0$. In this section we establish some results concerning to the maximal functions 
$\eta_{q, \, \gamma} (g; \cdot)$ and $N_{q, \, \gamma}(G; \cdot)$ defined in (\ref{etamax}) and (\ref{Nmax}) respectively. We recall that 
the maximal function $N_{q, \, \gamma}(G; \cdot)$ is used to define the variable Calder\'on-Hardy spaces on $\mathbb{H}^{n}$ 
(see Definition \ref{var cal-hardy}).

\begin{lemma} \label{puntual 1} Let $G \in E^{q}_{k}$ with $N_{q, \, \gamma}(G; z_0) < \infty,$ for some $z_0 \in \mathbb{H}^{n}$. Then:

$(i)$ There exists a unique $g \in G$ such that $\eta_{q, \, \gamma} (g; z_0) < \infty$ and, therefore, 
$\eta_{q, \, \gamma} (g; z_0) = N_{q, \, \gamma}(G; z_0)$.

$(ii)$ For any $\rho$-ball $B$, there is a constant $c$ depending on $z_0$ and $B$ such that if $g$ is the unique representative of $G$ given in $(i)$, then
$$\|G\|_{q, \, B} \leq |g|_{q, \, B} \leq c \, \eta_{q, \, \gamma} (g; z_0) = c \, N_{q, \, \gamma}(G; z_0).$$

The constant $c$ can be chosen independently of $z_0$ provided that $z_0$ varies in a compact set.
\end{lemma}
\begin{proof} The proof is similar to the one given in \cite[Lemma 3]{segovia}.
\end{proof}

\begin{lemma} \label{series in Eqk} Let $\{ G_{j} \}$ be a sequence in $E^{q}_{k}$ such that for a given point $z_0 \in \mathbb{H}^n$, the series 
$\sum_j N_{q, \, \gamma}(G_{j}; \, z_0 )$ is finite. Then

$(i)$ The series $\sum_j G_j$ converges in $E_{k}^{q}$ to an element $G$ and 
$$N_{q, \, \gamma}(G; \, z_0 ) \leq \sum_j N_{q, \, \gamma}(G_{j}; \, z_0 ).$$

$(ii)$ If $g_j$ is the unique representative of $G_j$ satisfying
$\eta_{q, \, \gamma} (g_j; z_0) = N_{q, \, \gamma}(G_j; z_0)$, then $\sum_j g_j$ converges in $L^{q}_{loc}(\mathbb{H}^{n})$ to a function 
$g$ that is the unique representative of $G$ satisfying $\eta_{q, \, \gamma} (g; z_0) = N_{q, \, \gamma}(G; z_0)$
\end{lemma}

\begin{proof} The proof is similar to the one given in \cite[Lemma 4]{segovia}.
\end{proof}

\begin{proposition} (\cite[Proposition 15]{rocha5}) \label{g distrib}
If $g \in L^{q}_{loc}(\mathbb{H}^{n})$, $1 < q < \infty$, and there is a point $z_0 \in \mathbb{H}^{n}$ such that 
$\eta_{q, \, \gamma} (g ; z_0) < \infty$, then $g \in \mathcal{S}'(\mathbb{H}^{n})$.
\end{proposition}

Given an exponent $p(\cdot): \mathbb{H}^{n} \rightarrow (0, \infty)$, $1 < q < \infty$ and $\gamma > 0$, we consider the variable 
Calder\'on-Hardy  space $\mathcal{H}^{p(\cdot)}_{q, \, \gamma}(\mathbb{H}^{n})$. The following result states the completeness of variable Calder\'on-Hardy spaces.

\begin{proposition} \label{completeness} The space $\mathcal{H}^{p(\cdot)}_{q, \, \gamma}(\mathbb{H}^{n})$, $0 < p_{-} \leq p_{+} < \infty$, is complete.
\end{proposition}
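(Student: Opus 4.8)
The plan is to follow the standard template for proving completeness of a quasi-normed function space: it suffices to show that every absolutely convergent series converges. That is, I would let $\{G_j\}_{j \geq 1} \subset \mathcal{H}^{p(\cdot)}_{q,\gamma}(\mathbb{H}^{n})$ with $\sum_{j} \|G_j\|_{\mathcal{H}^{p(\cdot)}_{q,\gamma}(\mathbb{H}^{n})}^{\underline{p}} < \infty$, and produce $G \in \mathcal{H}^{p(\cdot)}_{q,\gamma}(\mathbb{H}^{n})$ such that the partial sums $\sum_{j=1}^{m} G_j$ converge to $G$ in the quasi-norm. Because of property $3$ (the $\underline{p}$-triangle inequality for $\|\cdot\|_{p(\cdot)}$), this reduces the completeness of $\mathcal{H}^{p(\cdot)}_{q,\gamma}$ to the completeness of $L^{p(\cdot)}(\mathbb{H}^{n})$, provided one can build the limiting element $G$ inside the quotient space $E^{q}_{k}$ and relate $N_{q,\gamma}\big(G - \sum_{j=1}^m G_j; \cdot\big)$ to $\sum_{j>m} N_{q,\gamma}(G_j;\cdot)$.

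First I would fix representatives. For each $j$, using \cite[Lemma 11-(i)]{rocha5} (the same device invoked in the paragraph preceding Theorem \ref{main thm}), choose $g_j \in G_j$ realizing the maximal function, or at least a representative with $\|\eta_{q,\gamma}(g_j;\cdot)\|_{p(\cdot)} \leq 2\|G_j\|_{\mathcal{H}^{p(\cdot)}_{q,\gamma}(\mathbb{H}^{n})}$. The sublinearity of $g \mapsto \eta_{q,\gamma}(g;z)$ in each fixed $z$ (immediate from Minkowski's inequality in $|\cdot|_{q,B}$ and the subadditivity of the supremum over $r$) together with property $3$ gives, for partial sums, the key estimate
\[
\Big\| \eta_{q,\gamma}\Big( \textstyle\sum_{j=m+1}^{m'} g_j;\cdot\Big) \Big\|_{p(\cdot)}^{\underline{p}} \leq \sum_{j=m+1}^{m'} \|\eta_{q,\gamma}(g_j;\cdot)\|_{p(\cdot)}^{\underline{p}},
\]
which is Cauchy in $m$ by hypothesis. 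Next I would argue that $\sum_j g_j$ converges in $L^{q}_{loc}(\mathbb{H}^{n})$ to some $g$: indeed the estimate above, restricted to a fixed ball $B = B(z_0,R)$ and using that $\eta_{q,\gamma}(g_j;z) \geq R^{-\gamma}|g_j|_{q,B(z_0,R)}$ for $z$ near $z_0$ so that $\|\,|g_j|_{q,B}\chi_{B'}\|_{p(\cdot)}$ controls $|g_j|_{q,B}$ up to a constant depending on $B$, shows $\sum_j |g_j|_{q,B} < \infty$, hence convergence in $L^{q}(B)$; a diagonal/exhaustion argument over an increasing sequence of balls yields $g \in L^{q}_{loc}(\mathbb{H}^{n})$. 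Set $G$ to be the class of $g$ in $E^{q}_{k}$ (with $\gamma = k+t$, $0 < t \leq 1$ as in the decomposition fixed in the paper).

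Finally I would show $G - \sum_{j=1}^{m}G_j \to 0$ in $\mathcal{H}^{p(\cdot)}_{q,\gamma}$. For fixed $z$, the functional $g \mapsto \eta_{q,\gamma}(g;z)$ is continuous under $L^{q}_{loc}$-convergence on the relevant ball, so $\eta_{q,\gamma}\big(g - \sum_{j=1}^m g_j;z\big) \leq \liminf_{m' \to \infty}\eta_{q,\gamma}\big(\sum_{j=m+1}^{m'} g_j;z\big)$; applying Fatou's lemma for the variable Lebesgue norm (which holds for $\|\cdot\|_{p(\cdot)}$, cf. the modular definition) and then the displayed $\underline{p}$-estimate gives
\[
\Big\| N_{q,\gamma}\Big(G - \textstyle\sum_{j=1}^{m}G_j;\cdot\Big)\Big\|_{p(\cdot)}^{\underline{p}} \leq \Big\| \eta_{q,\gamma}\Big(g - \textstyle\sum_{j=1}^{m}g_j;\cdot\Big)\Big\|_{p(\cdot)}^{\underline{p}} \leq \sum_{j=m+1}^{\infty} \|\eta_{q,\gamma}(g_j;\cdot)\|_{p(\cdot)}^{\underline{p}} \xrightarrow[m\to\infty]{} 0.
\]
Taking $m = 0$ also shows $G \in \mathcal{H}^{p(\cdot)}_{q,\gamma}(\mathbb{H}^{n})$, completing the proof. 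The main obstacle I anticipate is the interchange of limits in the last step — justifying that the $L^{q}_{loc}$-limit $g$ represents a class whose maximal function is controlled pointwise a.e. by $\liminf$ of the tail maximal functions, i.e. getting the Fatou-type lower semicontinuity of $z \mapsto \eta_{q,\gamma}(\cdot;z)$ to cooperate with the quotient structure of $E^{q}_{k}$; this is exactly the place where \cite[Lemma 9]{rocha5} (lower semicontinuity of $N_{q,\gamma}$) and \cite[Lemma 11]{rocha5} are needed, and care must be taken that the representative $g$ is chosen consistently (not merely ball-by-ball) so that $N_{q,\gamma}(G;z) \leq \eta_{q,\gamma}(g;z)$ globally.
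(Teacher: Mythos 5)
Your plan is the standard completeness criterion for a $\underline{p}$-normed space (every series with $\sum_j \|G_j\|^{\underline{p}}_{\mathcal{H}^{p(\cdot)}_{q,\gamma}}<\infty$ converges), carried out by passing to near-minimizing representatives via \cite[Lemma 11]{rocha5}, summing in $L^{q}_{loc}$, and using lower semicontinuity of $\eta_{q,\gamma}(\cdot\,;z)$ plus Fatou for $\|\cdot\|_{p(\cdot)}$ to control the tails. This is exactly the argument of \cite[Proposition 9]{rocha1}, which is all the paper itself cites for this proposition, so your proposal is correct and essentially the same as the intended proof.
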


\begin{proof} 
The proof is similar to the one given in \cite[Proposition 9]{rocha1}.
\end{proof}

\subsection{Variable Hardy spaces on $\mathbb{H}^{n}$}

In the paper \cite{Fang}, J. Fang and J. Zhao give a variety of distinct approaches, based on differing definitions, all lead to the same notion of variable Hardy space $H^{p(\cdot)}(\mathbb{H}^n)$.

We recall some terminologies and notations from the study of maximal functions used in \cite{Fang}. Given $N \in \mathbb{N}$, define 
\[
\mathcal{F}_{N}=\left\{ \phi \in \mathcal{S}(\mathbb{H}^{n}):\sum\limits_{d(I) \leq N}
\sup\limits_{z \in \mathbb{H}^{n}}\left( 1 + \rho(z) \right)^{N} | (X^{I} \phi) (z) | \leq 1\right\}.
\]
For any $f \in \mathcal{S}'(\mathbb{H}^{n})$, the grand maximal function of $f$ is given by 
\[
\mathcal{M}_{\mathcal{F}_{N}}f(z)=\sup\limits_{t>0}\sup\limits_{\phi\in\mathcal{F}_{N}} 
\left\vert \left( f \ast t^{-Q}\phi(t^{-1} \cdot) \right)(z) \right\vert,
\]
where $N$ is a large and fix integer.

\begin{definition} \label{Dp def} Given an exponent function $p(\cdot):\mathbb{H}^{n} \to ( 0,\infty)$ with $0 < p_{-} \leq p_{+} < \infty$, we define the integer $\mathcal{D}_{p(\cdot)}$ by
\[
\mathcal{D}_{p(\cdot)} := \min \{ k \in \mathbb{N} \cup \{0\} : (2n+k+3) p_{-} > 2n+2 \}.
\]
For $N \geq \mathcal{D}_{p(\cdot)} + 1$, define 
the variable Hardy space $H^{p(\cdot)}(\mathbb{H}^{n})$ to be the collection of $f \in \mathcal{S}'(\mathbb{H}^{n})$ such that
$\| \mathcal{M}_{\mathcal{F}_{N}} f \|_{L^{p(\cdot)}(\mathbb{H}^{n})} < \infty$. Then, the "norm" on the space $H^{p(\cdot)}(\mathbb{H}^{n})$ is taken to be
$\| f \|_{H^{p(\cdot)}} := \| \mathcal{M}_{\mathcal{F}_{N}} f \|_{L^{p(\cdot)}}$.
\end{definition}

\begin{remark}
For $N \geq \mathcal{D}_{p(\cdot)} + 1$, the $H^{p(\cdot)}$-norm defined above does not depend on $N$ (see e.g. \cite{Foll-St}, therein it considers the case when $p(\cdot)$ is a constant, the variable case is similar).
\end{remark}

\begin{definition} \label{atom} Let $p(\cdot):\mathbb{H}^{n} \to ( 0,\infty)$, $0 < p_{-} \leq p_{+} < \infty $, and $p_{0} > 1$. 
Fix an integer $D \geq \mathcal{D}_{p(\cdot)}$. A measurable function $a(\cdot)$ on $\mathbb{H}^{n}$ is called a $(p(\cdot), p_{0}, D)$ - atom centered at a $\rho$ - ball $B=B(z_0, \delta)$ if it satisfies the following conditions:

$a_{1})$ $\supp(a) \subset B$,

$a_{2})$ $\| a \|_{L^{p_{0}}(\mathbb{H}^{n})} \leq 
\displaystyle{\frac{| B |^{\frac{1}{p_{0}}}}{\| \chi _{B} \|_{L^{p(\cdot)}(\mathbb{H}^{n})}}}$,

$a_{3})$ $\displaystyle{\int_{\mathbb{H}^{n}}} a(z) \, z^{I} \, dz = 0$ for all multiindex $I$ such that $d(I) \leq D$.
\end{definition}

Indeed, every $(p(\cdot), p_{0}, D)$ - atom $a(\cdot)$ belongs to $H^{p(\cdot)}(\mathbb{H}^{n})$. Moreover, there exists an universal constant
$C > 0$ such that $\| a \|_{H^{p(\cdot)}} \leq C$ for all $(p(\cdot), p_{0}, D)$ - atom $a(\cdot)$.

\

The following results talk about the size of the $\rho$ - balls in the $L^{p(\cdot)}(\mathbb{H}^{n})$-norm.

\begin{lemma} \label{Lp size}
(\cite[Lemma 4.1.]{Fang}) Suppose that $p(\cdot)$ is an exponent function such that $p(\cdot) \in \mathcal{P}^{\log}(\mathbb{H}^n)$ and 
$0 < p_{-} \leq p_{+} < \infty$.
\newline
$1)$ For all $\rho$-balls $B=B(z, \delta)$ with $z\in \mathbb{H}^{n}$ and $\vert B \vert \leq 1$, we
have
\[
\left\vert B\right\vert ^{\frac{1}{p_{-}(B)}}\sim \left\vert  B \right\vert^{
\frac{1}{p_{+}(B)}}\sim \left\vert B\right\vert ^{\frac{1}{p(z)}}\sim
\left\Vert \chi _{B}\right\Vert _{L^{p(\cdot)}(\mathbb{H}^{n})}.
\]%
$2)$ For all $\rho$-balls $B=B(z, \delta)$ with $z\in \mathbb{H}^{n}$ and $\vert B \vert \geq 1$ we
have
\[
\left\vert B \right\vert ^{\frac{1}{p_{\infty }}}\sim \left\Vert \chi
_{B}\right\Vert _{L^{p(\cdot)}(\mathbb{H}^{n})}.
\]
Here the implicit constants in $\sim $ do not depend on $z$ and $r>0.$
\end{lemma}

\begin{definition} Let $p(\cdot):\mathbb{H}^{n} \to ( 0,\infty)$ be an exponent such that $0 < p_{-} \leq p_{+} < \infty$.
Given a sequence of nonnegative numbers $\{ \lambda_j \}_{j=1}^{\infty}$ and a family of $\rho$ - balls $\{ B_j \}_{j=1}^{\infty}$, we define
\begin{equation} \label{cantidad A}
\mathcal{A} \left( \{ \lambda_j \}_{j=1}^{\infty}, \{ B_j \}_{j=1}^{\infty}, p(\cdot) \right) := 
\left\| \left\{ \sum_{j=1}^{\infty} \left( \frac{\lambda_j  \chi_{B_j}}{\| \chi_{B_j} \|_{L^{p(\cdot)}}} \right)^{\underline{p}} 
\right\}^{1/\underline{p}} \right\|_{L^{p(\cdot)}}.
\end{equation}
The space $H_{atom}^{p(\cdot),p_{0},D}\left( \mathbb{H}^{n}\right) $ is the set of all distributions $f\in S^{\prime }(\mathbb{H}^{n})$ such that it can be written as 
\begin{equation}
f=\sum\limits_{j=1}^{\infty }\lambda_{j}a_{j}  \label{desc. atomica}
\end{equation}%
in $S^{\prime }(\mathbb{H}^{n}),$ where $\left\{ \lambda_{j}\right\}_{j=1}^{\infty }$ is a sequence of non negative numbers, the $a_{j}$'s are $(p(\cdot),p_{0},D)$ - atoms supported on the $\rho$-ball $B_j$ and 
$\mathcal{A}\left( \left\{ \lambda_{j}\right\}_{j=1}^{\infty },\left\{ B_{j}\right\} _{j=1}^{\infty }, p(\cdot)\right) < \infty$. One defines
\[
\left\Vert f\right\Vert _{H_{atom}^{p(\cdot),p_{0},D}}=\inf \mathcal{A}\left(\left\{ \lambda_{j}\right\} _{j=1}^{\infty }, 
\left\{ B_{j}\right\} _{j=1}^{\infty}, p(\cdot)\right)
\]%
where the infimum is taken over all admissible expressions as in (\ref{desc. atomica}).
\end{definition}

\begin{definition}
Given a collection of sets $\{ E_j \}_{j \in \mathbb{N}}$, we say that the family $\{ E_j \}_{j \in \mathbb{N}}$ has the bounded intersection property if there exists $L \in \mathbb{N}$ such that no point of $\cup_{k \in \mathbb{N}} E_k$ lies in more than $L$ of the sets $E_j$.
\end{definition}

Next, we establish the following version of Theorem 4.5 given in \cite{Fang}. 

\begin{theorem} \label{var decatomic}
If $p(\cdot) \in \mathcal{P}^{\log}(\mathbb{H}^n)$ and $p_0 > \max\{ p_{+}, 1\}$ is sufficiently large, then the quantities $\left\Vert f\right\Vert_{H_{atom}^{p(\cdot), p_{0}, D}}$ and 
$\left\Vert f\right\Vert _{H^{p(\cdot)}}$ are comparable. Moreover, $f$ admits an atomic decomposition 
$f=\sum\limits_{j=1}^{\infty } \lambda_{j} a_{j}$ such that
\[
\mathcal{A}\left( \left\{ \lambda_{j} \right\}_{j=1}^{\infty },\left\{ B_{j}\right\}_{j=1}^{\infty}, p(\cdot)\right) \leq 
C \, \|f \|_{H^{p(\cdot)}},
\]
where $C$ does not depend on $f$ and the family of $\rho$-balls $\left\{ B_{j}\right\} _{j=1}^{\infty}$ has the bounded intersection property.
\end{theorem}

The following two results were proved in \cite[Lemma 5.7 and Proposition 3.3]{rocha2} respectively.

\begin{lemma} \label{ineq p star}
Let $p(\cdot) : \mathbb{H}^{n} \to (0, \infty)$ be an exponent function with $0 < p_{-} \leq p_{+} < \infty$ and let $\{ B_j \}$ be a family of $\rho$ - balls which satisfies the bounded intersection property. If $0 < p_{*} < \underline{p}$, then
\[
\left\| \left\{ \sum_j \left( \frac{\lambda_j \chi_{B_j}}{\| \chi_{B_j} \|_{L^{p(\cdot)}}} \right)^{p_{*}} \right\}^{1/p_{*}}
\right\|_{L^{p(\cdot)}} \sim \mathcal{A} \left( \{ \lambda_j \}_{j=1}^{\infty}, \{ B_j \}_{j=1}^{\infty}, p(\cdot) \right)
\]
for any sequence of nonnegative numbers $\{ \lambda_j \}_{j=1}^{\infty}$.
\end{lemma}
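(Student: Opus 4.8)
The plan is to establish the equivalence by reducing it, in each direction, to a pointwise comparison of the two $\ell^{s}$-type sums and then invoking the monotonicity of the quasi-norm $\|\cdot\|_{L^{p(\cdot)}}$: if $0 \le f \le g$ a.e.\ then $\|f\|_{p(\cdot)} \le \|g\|_{p(\cdot)}$, which is immediate from the definition of $\|\cdot\|_{p(\cdot)}$. Throughout, set $c_j(z) = \lambda_j \chi_{B_j}(z)/\|\chi_{B_j}\|_{L^{p(\cdot)}}$, and note that $0 < \|\chi_{B_j}\|_{L^{p(\cdot)}} < \infty$ because each $B_j$ has finite positive Haar measure, so the $c_j(z)$ are well defined; moreover $c_j(z) = 0$ whenever $z \notin B_j$.

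For the inequality $\mathcal{A}(\{\lambda_j\}, \{B_j\}, p(\cdot)) \lesssim \big\| \{ \sum_j c_j^{p_*} \}^{1/p_*} \big\|_{L^{p(\cdot)}}$ I would use only the elementary bound $\big( \sum_j a_j^{\underline{p}} \big)^{1/\underline{p}} \le \big( \sum_j a_j^{p_*} \big)^{1/p_*}$, valid for any nonnegative sequence $\{a_j\}$ as soon as $0 < p_* \le \underline{p}$ (after normalizing $\sum_j a_j^{p_*} = 1$ one has $a_j \le 1$, hence $a_j^{\underline{p}} \le a_j^{p_*}$). Applying it pointwise with $a_j = c_j(z)$ and then taking $L^{p(\cdot)}$ quasi-norms gives this direction with constant $1$; no hypothesis on the geometry of the balls is used here.

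The reverse inequality is where the bounded intersection property enters. Let $L$ be a constant with $\sum_j \chi_{B_j} \le L$ a.e., fix $z$, and put $S(z) = \{ j : z \in B_j \}$, so that $\# S(z) \le L$ and all the relevant sums are, in effect, finite sums over $S(z)$. Hölder's inequality on $S(z)$ with the conjugate exponents $\underline{p}/p_*$ and $\underline{p}/(\underline{p}-p_*)$ yields
\[
\sum_{j} c_j(z)^{p_*} \le \Big( \sum_{j} c_j(z)^{\underline{p}} \Big)^{p_*/\underline{p}} (\# S(z))^{1-p_*/\underline{p}} \le L^{1-p_*/\underline{p}} \Big( \sum_{j} c_j(z)^{\underline{p}} \Big)^{p_*/\underline{p}},
\]
and therefore $\big( \sum_j c_j(z)^{p_*} \big)^{1/p_*} \le L^{1/p_* - 1/\underline{p}} \big( \sum_j c_j(z)^{\underline{p}} \big)^{1/\underline{p}}$ pointwise in $z$. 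Taking $L^{p(\cdot)}$ quasi-norms and using monotonicity once more gives $\big\| \{ \sum_j c_j^{p_*} \}^{1/p_*} \big\|_{L^{p(\cdot)}} \le L^{1/p_* - 1/\underline{p}} \, \mathcal{A}(\{\lambda_j\}, \{B_j\}, p(\cdot))$, which finishes the proof.

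Both steps are elementary, so I do not expect a genuine obstacle. The only points that require a little care are checking that $\|\chi_{B_j}\|_{L^{p(\cdot)}}$ is finite and nonzero, so that the normalized functions $c_j$ make sense, and fixing the Hölder exponents so that the overlap constant $L$ comes in with the correct positive power $1/p_* - 1/\underline{p}$.
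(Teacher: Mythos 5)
Your proof is correct: the embedding $\ell^{p_*}\hookrightarrow\ell^{\underline{p}}$ (with constant $1$) gives the direction that needs no geometry, and H\"older's inequality over the at most $L$ indices $j$ with $z\in B_j$, followed by the monotonicity and homogeneity of $\|\cdot\|_{L^{p(\cdot)}}$, gives the reverse with constant $L^{1/p_*-1/\underline{p}}$. The paper does not prove this lemma itself but cites \cite[Lemma 5.7]{rocha2}, and your argument is exactly the standard proof of that cited result, with the exponent bookkeeping done correctly.
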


\begin{proposition} \label{b_j functions}
Let $p(\cdot) : \mathbb{H}^{n} \to (0, \infty)$ such that $p(\cdot) \in \mathcal{P}^{\log}(\mathbb{H}^{n})$ and 
$0 < p_{-} \leq p_{+} < \infty$. Let $s > 1$ and $0 < p_{*} < \underline{p}$ such that $s p_{*} > p_{+}$ and let $\{ b_j \}_{j=1}^{\infty}$ be a sequence of nonnegative functions in $L^{s}(\mathbb{H}^{n})$ such that each $b_j$ is supported in a $\rho$ - ball 
$B_j \subset \mathbb{H}^{n}$ and
\begin{equation} \label{bks}
\| b_j \|_{L^{s}(\mathbb{H}^{n})} \leq A_j |B_j|^{1/s},
\end{equation}
where $A_j >0$ for all $j \geq 1$. Then, for any sequence of nonnegative numbers $\{ \lambda_j \}_{j=1}^{\infty}$ we have
\[
\left\| \sum_{j=1}^{\infty} \lambda_j b_j \right\|_{L^{p(\cdot)/p_{*}}(\mathbb{H}^{n})} \leq C \left\| \sum_{j=1}^{\infty} A_j \lambda_j 
\chi_{B_j} \right\|_{L^{p(\cdot)/p_{*}}(\mathbb{H}^{n})},
\]
where $C$ is a positive constant which does not depend on $\{ b_j \}_{j=1}^{\infty}$, $\{ A_j \}_{j=1}^{\infty}$, and 
$\{ \lambda_j \}_{j=1}^{\infty}$.
\end{proposition}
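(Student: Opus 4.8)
The plan is to prove the estimate by duality in the Banach function space $L^{r(\cdot)}(\mathbb{H}^{n})$ with $r(\cdot) := p(\cdot)/p_{*}$. The first step is to record the elementary features of this exponent. Since $0 < p_{*} < \underline{p} \leq p_{-}$, one has $1 < r_{-} = p_{-}/p_{*} \leq r_{+} = p_{+}/p_{*} < \infty$; and dividing an exponent by a positive constant preserves both log-H\"older conditions, so $r(\cdot) \in \mathcal{P}^{\log}(\mathbb{H}^{n})$ and, by the same token, so does its pointwise conjugate exponent $r'(\cdot)$. Because $r_{-} > 1$, the space $L^{r(\cdot)}(\mathbb{H}^{n})$ is a Banach space whose norm obeys the norm-conjugate formula $\|F\|_{r(\cdot)} \approx \sup\{ \int_{\mathbb{H}^{n}} |Fh| : \|h\|_{r'(\cdot)} \leq 1 \}$ as well as the generalized H\"older inequality for the pair $(r(\cdot), r'(\cdot))$. (If one prefers to have $\sum_{j}\lambda_{j}b_{j}$ a priori in $L^{r(\cdot)}$ before invoking duality, note that $s > r_{+}$ forces every compactly supported $L^{s}$ function into $L^{r(\cdot)}$, so one may first work with the partial sums and pass to the limit by monotone convergence of the modular $\kappa_{r(\cdot)}$; in any case Tonelli's theorem justifies the interchange of sum and integral below.)

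Next I would fix a nonnegative $h$ with $\|h\|_{r'(\cdot)} \leq 1$ and estimate $\int_{\mathbb{H}^{n}}\big(\sum_{j}\lambda_{j}b_{j}\big)h = \sum_{j}\lambda_{j}\int_{B_{j}}b_{j}h$. For each $j$, H\"older's inequality with exponents $s$ and $s'$ together with hypothesis (\ref{bks}) gives
\[
\int_{B_{j}} b_{j} h \leq \|b_{j}\|_{L^{s}}\Big(\int_{B_{j}} h^{s'}\Big)^{1/s'} \leq A_{j}|B_{j}|\Big(|B_{j}|^{-1}\int_{B_{j}} h^{s'}\Big)^{1/s'} \leq A_{j}\int_{B_{j}}\big(M(h^{s'})(z)\big)^{1/s'}\,dz,
\]
the last inequality because the ball average of $h^{s'}$ over $B_{j}$ is at most $M(h^{s'})(z)$ for every $z \in B_{j}$. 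Summing in $j$ and then applying the generalized H\"older inequality,
\[
\int_{\mathbb{H}^{n}}\Big(\sum_{j}\lambda_{j}b_{j}\Big)h \leq \int_{\mathbb{H}^{n}}\Big(\sum_{j}A_{j}\lambda_{j}\chi_{B_{j}}\Big)\big(M(h^{s'})\big)^{1/s'} \leq C\Big\|\sum_{j}A_{j}\lambda_{j}\chi_{B_{j}}\Big\|_{r(\cdot)}\big\|\big(M(h^{s'})\big)^{1/s'}\big\|_{r'(\cdot)}.
\]

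It remains to bound $\big\|\big(M(h^{s'})\big)^{1/s'}\big\|_{r'(\cdot)}$ by an absolute constant, and this is exactly where the hypothesis $sp_{*} > p_{+}$ enters. By the scaling identity $\|f\|_{q(\cdot)}^{s'} = \||f|^{s'}\|_{q(\cdot)/s'}$ noted above, this quantity equals $\|M(h^{s'})\|_{r'(\cdot)/s'}^{1/s'}$. The exponent $r'(\cdot)/s'$ lies in $\mathcal{P}^{\log}(\mathbb{H}^{n})$, and its lower index is $(r_{+})'/s' = p_{+}\big((p_{+}-p_{*})s'\big)^{-1}$, which exceeds $1$ precisely when $sp_{*} > p_{+}$. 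Hence the Hardy--Littlewood maximal operator is bounded on $L^{r'(\cdot)/s'}(\mathbb{H}^{n})$, so $\|M(h^{s'})\|_{r'(\cdot)/s'}^{1/s'} \leq C\|h^{s'}\|_{r'(\cdot)/s'}^{1/s'} = C\|h\|_{r'(\cdot)} \leq C$. Inserting this into the previous display and taking the supremum over all admissible $h$ yields $\big\|\sum_{j}\lambda_{j}b_{j}\big\|_{r(\cdot)} \leq C\big\|\sum_{j}A_{j}\lambda_{j}\chi_{B_{j}}\big\|_{r(\cdot)}$, which is the asserted inequality since $r(\cdot) = p(\cdot)/p_{*}$.

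The H\"older estimates and the stability of $\mathcal{P}^{\log}$ under reciprocals and scalar multiplication are routine; the step I expect to be the crux is the exponent bookkeeping that matches the structural hypothesis $sp_{*} > p_{+}$ exactly to the condition $(r'(\cdot)/s')_{-} > 1$ required for the maximal-function bound on $L^{r'(\cdot)/s'}(\mathbb{H}^{n})$. This identity is what forces the particular admissible range of $p_{*}$ (namely $p_{*} \in (p_{+}/s, \underline{p})$) and is the single point on which the whole argument turns.
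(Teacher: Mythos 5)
Your argument is correct, and it is the standard duality argument for this type of estimate: pair $\sum_j \lambda_j b_j$ against $h$ with $\|h\|_{(p(\cdot)/p_*)'} \leq 1$, use H\"older on each $B_j$ to replace $b_j$ by $A_j\chi_{B_j}$ at the cost of $(M(h^{s'}))^{1/s'}$, and close with the boundedness of $M$ on $L^{(p(\cdot)/p_*)'/s'}$, which is exactly where $sp_* > p_+$ is needed. This is essentially the same proof as the one the paper relies on (it defers to Proposition 3.3 of the cited reference, which is established by precisely this duality-plus-maximal-function scheme), and your exponent bookkeeping checks out.
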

For our next result, we first introduce two maximal operators on $\mathbb{H}^n$. The first is a discrete maximal and the second one 
is a non-tangential maximal. Given $\phi \in \mathcal{S}(\mathbb{H}^n)$, we define
\[
(M_{\phi}^{dis} f)(z) := \sup \left\{ |(f \ast \phi_{2^j})(z)| : j \in \mathbb{Z} \right\},
\]
where $\phi_{2^j}(z) = 2^{-jQ} \phi (2^{-j}z)$, and put
\[
(M_{\phi} f)(z) := \sup \left\{ |(f \ast \phi_t)(w)| : \rho(w^{-1} \cdot z) < t, \, 0 < t < \infty \right\}.  
\]
The following pointwise inequality is obvious
\[
(M_{\phi}^{dis} f)(z) \leq (M_{\phi} f)(z),
\]
for all $z \in \mathbb{H}^n$. Now, this inequality and Proposition 16 in \cite{rocha5} lead to the next result. 

\begin{proposition} \label{Lg dist} Let $g \in L^q_{loc} \cap \mathcal{S}'(\mathbb{H}^n)$ and $f = \mathcal{L} g$ in 
$\mathcal{S}'(\mathbb{H}^n)$. If $\phi \in \mathcal{S}(\mathbb{H}^n)$ and $N > Q+2$, then
\[
(M_{\phi}^{dis}f)(z) \leq C \| \phi \|_{\mathcal{S}(\mathbb{H}^{n}), N}  \,\,\, \eta_{q, 2}(g; \, z)
\]
holds for all $z \in \mathbb{H}^n$.
\end{proposition}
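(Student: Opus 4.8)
The plan is to deduce the bound from the already-noted pointwise domination together with Proposition~16 of \cite{rocha5}. Indeed, since $(M_{\phi}^{dis}f)(z)\le (M_{\phi}f)(z)$ for every $z\in\mathbb{H}^{n}$, it suffices to control the non-tangential maximal function $(M_{\phi}f)(z)$, and the estimate $(M_{\phi}f)(z)\le C\,\|\phi\|_{\mathcal{S}(\mathbb{H}^{n}),N}\,\eta_{q,2}(g;z)$ (valid for $N>Q+2$ when $f=\mathcal{L}g$ with $g\in L^{q}_{loc}\cap\mathcal{S}'(\mathbb{H}^{n})$) is precisely the content of that proposition. Thus the statement follows at once. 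For completeness I would also recall why Proposition~16 holds.

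The starting observation is that, since the sublaplacian is formally self-adjoint and homogeneous of degree $2$ with respect to the Heisenberg dilations, one has, for $f=\mathcal{L}g$ and $t>0$,
\[
(f\ast\phi_{t})(w)=t^{-2}\,\bigl(g\ast(\mathcal{L}^{R}\phi)_{t}\bigr)(w),
\]
where $\mathcal{L}^{R}$ denotes the sublaplacian built from the right-invariant vector fields; this identity is legitimate because $g\in\mathcal{S}'(\mathbb{H}^{n})$. Put $\psi:=\mathcal{L}^{R}\phi$. Then $\psi\in\mathcal{S}(\mathbb{H}^{n})$ with $\|\psi\|_{\mathcal{S}(\mathbb{H}^{n}),M}\lesssim\|\phi\|_{\mathcal{S}(\mathbb{H}^{n}),M+2}$, and, integrating by parts, $\int_{\mathbb{H}^{n}}\psi(u)\,u^{I}\,du=0$ whenever $d(I)\le 1$. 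Consequently, for any $P\in\mathcal{P}_{1}$ one may replace $g$ by $g-P$ in the convolution $g\ast\psi_{t}$. Choosing $P$ to be a best $L^{q}$-approximation of $g$ of homogeneous degree at most $1$ on the $\rho$-ball $B(z,t)$, splitting the integral into the dyadic annuli $B(z,2^{k+1}t)\setminus B(z,2^{k}t)$, $k\ge 0$ (recall $\rho(w^{-1}\cdot z)<t$), and using Hölder's inequality with exponents $q$, $q'$ together with the rapid decay of $\psi_{t}$, one bounds $|(g\ast\psi_{t})(w)|$ by $C\,\|\phi\|_{\mathcal{S}(\mathbb{H}^{n}),N}\,t^{2}\,\eta_{q,2}(g;z)$; dividing by $t^{2}$ and taking the supremum over the cone $\rho(w^{-1}\cdot z)<t$, $0<t<\infty$, yields the desired control of $(M_{\phi}f)(z)$.

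The only genuinely new step here is the trivial pointwise inequality, so there is essentially no obstacle beyond invoking \cite{rocha5}. If one insists on a self-contained argument, the delicate point is the convergence of the series over the annuli: the decay $|\psi_{t}(u^{-1}\cdot w)|\lesssim t^{-Q}(1+2^{k})^{-N}$ must beat both the volume growth $|B(z,2^{k+1}t)|\approx (2^{k}t)^{Q}$ and the at-most-linear growth of $g-P$ over the enlarging balls, which is precisely what the hypothesis $N>Q+2$ guarantees; one must also check that the distributional manipulations ($f=\mathcal{L}g\in\mathcal{S}'(\mathbb{H}^{n})$, the displayed identity, and the moment cancellation of $\psi$) are admissible under the sole assumption $g\in L^{q}_{loc}\cap\mathcal{S}'(\mathbb{H}^{n})$.
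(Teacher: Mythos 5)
Your proposal is correct and follows essentially the same route as the paper: the paper's proof consists precisely of the trivial pointwise domination $(M_{\phi}^{dis}f)(z)\le (M_{\phi}f)(z)$ combined with Proposition~16 of the cited preprint, which bounds the non-tangential maximal function of $\mathcal{L}g$ by $\eta_{q,2}(g;z)$. Your additional sketch of that proposition (homogeneity of $\mathcal{L}$, vanishing moments of $\mathcal{L}^{R}\phi$ up to homogeneous degree $1$, subtraction of a degree-$1$ polynomial, dyadic annuli plus H\"older, with $N>Q+2$ ensuring summability) is consistent with the intended argument and is not required by the paper, which simply cites the result.
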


A fundamental solution for the sublaplacian on $\mathbb{H}^n$ was obtained by G. Folland in \cite{Folland}. More precisely, he proved the following result.

\begin{theorem} \label{fund solution}
$c_n \, \rho^{-2n}$ is a fundamental solution for $\mathcal{L}$ with source at $0$, where
\[
\rho(x,t) = (|x|^4 + t^2)^{1/4},
\]
and
\[
c_n = \left[ n(n+2) \int_{\mathbb{H}^n} |x|^2 (\rho(x,t)^4 + 1)^{-(n+4)/2} dxdt \right]^{-1}.
\]
In others words, for any $u \in \mathcal{S}(\mathbb{H}^{n})$, $\left( \mathcal{L}u,  c_n \rho^{-2n} \right) = u(0)$.
\end{theorem}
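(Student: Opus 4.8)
The plan is to realize $c_n\rho^{-2n}$ as the $\mathcal{S}'(\mathbb{H}^{n})$-limit of explicit smooth regularizations whose sublaplacians form an approximate identity at the neutral element. First I would note that $\rho^{-2n}$ is locally integrable on $\mathbb{H}^{n}$: near $0$ it is comparable to $\rho^{-(Q-2)}$, and since $Q-2 = 2n < Q$ this singularity is integrable with respect to Haar measure (using the homogeneity of the measure, see \eqref{homog dim}), while away from $0$ it is bounded and decays. Hence $\rho^{-2n}$ defines a tempered distribution and $\mathcal{L}(\rho^{-2n}) \in \mathcal{S}'(\mathbb{H}^{n})$ is well defined.

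The computational core is to evaluate $\mathcal{L}$ on powers of $N := \rho^{4} = |x|^{4}+t^{2}$. Using the explicit formulas for the vector fields $X_i$ from Section~2, a direct calculation gives the two pointwise identities
\[
\sum_{i=1}^{2n}(X_i N)^{2} = 16\,N\,|x|^{2}, \qquad \sum_{i=1}^{2n} X_i^{2} N = 8(n+2)\,|x|^{2}.
\]
For $\varepsilon > 0$ put $\psi_\varepsilon := (\rho^{4}+\varepsilon^{4})^{-n/2}$, a bounded smooth function. Since $\rho^{4}+\varepsilon^{4}$ differs from $N$ only by a constant, $X_i(\rho^{4}+\varepsilon^{4}) = X_i N$ and $X_i^{2}(\rho^{4}+\varepsilon^{4}) = X_i^{2} N$, so feeding the two identities into the chain rule $X_i^{2}F(u) = F''(u)(X_i u)^{2} + F'(u)X_i^{2}u$ with $F(u) = u^{-n/2}$ yields, after a short simplification using $N - (\rho^{4}+\varepsilon^{4}) = -\varepsilon^{4}$,
\[
\mathcal{L}\psi_\varepsilon = c(n)\,\varepsilon^{4}\,|x|^{2}\,(\rho^{4}+\varepsilon^{4})^{-(n+4)/2} =: g_\varepsilon \geq 0
\]
for an explicit positive dimensional constant $c(n)$. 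In particular, letting $\varepsilon \to 0$ gives $\mathcal{L}(\rho^{-2n}) = 0$ on $\mathbb{H}^{n}\setminus\{0\}$. I expect this step — obtaining the two vector-field identities and the exact constant in $g_\varepsilon$ — to be the main obstacle; it is elementary but must be carried out carefully, and it is here that the normalizing constant $c_n$ of the statement is produced.

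Finally one identifies the limit. By the dilation change of variables $(x,t) = (\varepsilon x', \varepsilon^{2}t')$, whose Jacobian is $\varepsilon^{Q}$ by \eqref{homog dim} and under which $\rho$ scales by $\varepsilon$, one checks $g_\varepsilon = \varepsilon^{-Q}g_1(\varepsilon^{-1}\cdot\,)$; consequently $\int_{\mathbb{H}^{n}} g_\varepsilon = \int_{\mathbb{H}^{n}} g_1$ for all $\varepsilon$ (finite, because $|x|^{2}\le\rho^{2}$ forces decay of order $\rho^{-(2n+6)} < \rho^{-Q}$ at infinity), and $\int g_\varepsilon\phi = \int g_1(w)\phi(\varepsilon\cdot w)\,dw \to \phi(0)\int g_1$ for every $\phi\in\mathcal{S}(\mathbb{H}^{n})$ by dominated convergence; thus $g_\varepsilon \to \big(\int g_1\big)\delta_0$ in $\mathcal{S}'(\mathbb{H}^{n})$, and $\int g_1$ is a finite dimensional constant whose reciprocal is, by its definition in the statement, exactly $c_n$. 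On the other hand $\psi_\varepsilon \uparrow \rho^{-2n}$ pointwise with $\psi_\varepsilon \le \rho^{-2n} \in L^{1}_{loc}$, so $\psi_\varepsilon \to \rho^{-2n}$ in $\mathcal{S}'(\mathbb{H}^{n})$ and hence $\mathcal{L}\psi_\varepsilon \to \mathcal{L}(\rho^{-2n})$ in $\mathcal{S}'(\mathbb{H}^{n})$ by continuity of $\mathcal{L}$. Comparing the two limits gives $\mathcal{L}(\rho^{-2n}) = c_n^{-1}\delta_0$, i.e. $\mathcal{L}(c_n\rho^{-2n}) = \delta_0$; and since $X_i^{*} = -X_i$ the operator $\mathcal{L} = -\sum X_i^{2}$ is formally self-adjoint, so $(\mathcal{L}u,\, c_n\rho^{-2n}) = \langle \mathcal{L}(c_n\rho^{-2n}), u\rangle = u(0)$ for every $u\in\mathcal{S}(\mathbb{H}^{n})$, which is the asserted identity. (Alternatively, once $\mathcal{L}(\rho^{-2n})$ is known to vanish off $0$, a distribution supported at $\{0\}$ is a finite combination of the $X^{I}\delta_0$, and a homogeneity count — $\rho^{-2n}$ scales with degree $-2n$ and $\mathcal{L}$ with degree $2$, so $\mathcal{L}(\rho^{-2n})$ scales with degree $-(2n+2) = -Q$, exactly like $\delta_0$ — forces $\mathcal{L}(\rho^{-2n}) = c\,\delta_0$, the constant $c$ then being computed by Green's formula on $\mathbb{H}^{n}\setminus B(0,\varepsilon)$ as $\varepsilon\to 0$.)
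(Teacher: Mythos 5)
The paper gives no proof of this statement at all --- it is quoted from Folland's 1973 note --- and your argument is precisely Folland's original one: regularize $\rho^{-2n}$ as $\psi_\varepsilon=(\rho^4+\varepsilon^4)^{-n/2}$, compute $\mathcal{L}\psi_\varepsilon$ in closed form from the two identities for $N=\rho^4$, and recognize an approximate identity by dilation homogeneity. The structure is sound: the identities $\sum_{i=1}^{2n}(X_iN)^2=16N|x|^2$ and $\sum_{i=1}^{2n}X_i^2N=8(n+2)|x|^2$ are correct for the vector fields of Section 2, and the limiting arguments ($\psi_\varepsilon\to\rho^{-2n}$ in $\mathcal{S}'$ by dominated convergence, $g_\varepsilon\to(\int g_1)\,\delta_0$, formal self-adjointness of $\mathcal{L}$ from $X_i^{*}=-X_i$) all go through.

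The one step you do not actually carry out is the only one that can fail, and it does. Feeding your two identities into the chain rule with $F(u)=u^{-n/2}$ gives $F''(u)\cdot 16N|x|^2+F'(u)\cdot 8(n+2)|x|^2=4n(n+2)\,|x|^2\,u^{-(n+4)/2}(N-u)$, hence $\mathcal{L}\psi_\varepsilon=4n(n+2)\,\varepsilon^4|x|^2(\rho^4+\varepsilon^4)^{-(n+4)/2}$, so that $\int g_1=4n(n+2)\int|x|^2(\rho^4+1)^{-(n+4)/2}\,dx\,dt=4/c_n$ with $c_n$ as defined in the statement --- not $1/c_n$. Your sentence asserting that the reciprocal of $\int g_1$ ``is, by its definition in the statement, exactly $c_n$'' is therefore false for the operator $\mathcal{L}=-\sum_{i=1}^{2n}X_i^2$ of Section 2: what your argument proves is $\mathcal{L}(c_n\rho^{-2n})=4\delta_0$, i.e.\ the fundamental solution of this $\mathcal{L}$ is $\tfrac14 c_n\rho^{-2n}$. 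The root cause is a normalization mismatch inherited from the source: Folland's constant is calibrated to his operator $\mathcal{L}_0=-\tfrac14\sum_{i=1}^{2n}X_i^2$ (equivalently $-\tfrac12\sum_j(Z_j\bar Z_j+\bar Z_jZ_j)$), for which your computation yields exactly $n(n+2)\varepsilon^4|x|^2u^{-(n+4)/2}$ and hence $\int g_1=1/c_n$. You should either track the constant explicitly and conclude with the corrected normalization, or verify the claimed identity rather than assert it; as written, the proof does not establish the statement in the form given. (The factor is harmless for the rest of the paper, where only $\mathcal{L}b=a$ up to a fixed multiplicative constant is used, but it should be stated, not elided.)
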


If $a$ is a bounded function with compact support on $\mathbb{H}^n$, its potential $b$, defined as 
\[
b(z) := \left( a \ast c_n \, \rho^{-2n} \right)(z) = c_n \int_{\mathbb{H}^{n}} \rho(w^{-1} \cdot z)^{-2n} a(w) dw,
\]
is a locally bounded function and, by Theorem \ref{fund solution}, $\mathcal{L} b = a$ in the sense of distributions. For these potentials, we have the following result.

\

In the sequel, $\beta$ is the constant in \cite[Corollary 1.44]{Foll-St}, we observe that $\beta \geq 1$ (see \cite[p. 29]{Foll-St}).

\begin{lemma} \label{b function} Let $a(\cdot)$ be an $(p(\cdot), p_{0}, D)-$atom centered at the $\rho-$ball $B=B(z_0, \delta)$. If $b(z) = (a \ast c_n \rho^{-2n})(z)$, then for $\rho(z_0^{-1} z) \geq 2 \beta^{2}\delta$ and every multi-index $I$ there exists a positive constant $C_{I}$ such that 
\[
\left| (X^{I}b)(z) \right| \leq C_{I} \, \delta^{2+Q} \, \|\chi_B \|^{-1}_{p(\cdot)} \, \rho(z_{0}^{-1} \cdot z)^{-Q-d(I)}
\]
holds.
\end{lemma}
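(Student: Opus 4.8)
The plan is to exploit the cancellation conditions $(a_3)$ of the atom together with the smoothness and decay of the kernel $\rho^{-2n}$ away from the origin, via a Taylor-type expansion on the Heisenberg group. Write
\[
(X^{I}b)(z) = c_n \int_{B} \left( X^{I}_{z} \rho(w^{-1} \cdot z)^{-2n} \right) a(w)\, dw,
\]
which is legitimate for $\rho(z_0^{-1} \cdot z)$ large since then $w \mapsto X^I_z \rho(w^{-1}\cdot z)^{-2n}$ is smooth on the support of $a$. Using the vanishing moments of $a$ up to homogeneous degree $D \geq \mathcal{D}_{p(\cdot)}$, I would subtract from $X^I_z \rho(w^{-1}\cdot z)^{-2n}$ its right Taylor polynomial of homogeneous degree $D$ around $w = z_0$ (in the group sense, i.e. the polynomial in $z_0^{-1}\cdot w$), so that
\[
(X^{I}b)(z) = c_n \int_{B} \left[ \left( X^{I}_{z}\rho(w^{-1}\cdot z)^{-2n} \right) - P_{D}(z_0^{-1}\cdot w) \right] a(w)\, dw.
\]
Then I would estimate the bracket by the Taylor remainder inequality \cite[Corollary 1.44]{Foll-St}: for $\rho(z_0^{-1}\cdot w) \leq \delta$ and $\rho(z_0^{-1}\cdot z) \geq 2\beta^2 \delta$ one gets a bound of the form $C\, \rho(z_0^{-1}\cdot w)^{D+1} \sup\{ |Y^{J} X^I_z \rho(\cdot^{-1}\cdot z)^{-2n}| : d(J) = D+1, \ \rho \leq \beta^2 \rho(z_0^{-1}\cdot w) \}$, where the supremum is over points in a suitable ball around $z_0$.

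The next step is to control the derivatives of the kernel. Since $\rho^{-2n}$ is homogeneous of degree $-2n$ and smooth away from $e$, each $X^I$ lowers the homogeneity by $d(I)$, so $|X^I_z \rho(w^{-1}\cdot z)^{-2n}| \lesssim \rho(w^{-1}\cdot z)^{-2n - d(I)}$, and similarly the order-$(D+1)$ mixed derivative is $\lesssim \rho(w^{-1}\cdot z)^{-2n-d(I)-(D+1)}$ on the region where $w$ stays within $\beta^2\delta$-ish of $z_0$; here I would use the quasi-triangle inequality for $\rho$ together with $\rho(z_0^{-1}\cdot z) \geq 2\beta^2\delta$ to conclude $\rho(w^{-1}\cdot z) \approx \rho(z_0^{-1}\cdot z)$ uniformly for $w \in B$. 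Plugging in, the bracket is $\lesssim \delta^{D+1} \rho(z_0^{-1}\cdot z)^{-2n - d(I) - (D+1)}$, hence
\[
|(X^I b)(z)| \lesssim \delta^{D+1}\, \rho(z_0^{-1}\cdot z)^{-2n-d(I)-(D+1)} \int_B |a(w)|\, dw.
\]
Finally I would bound $\int_B |a(w)|\,dw$ using Hölder's inequality and the size condition $(a_2)$: $\int_B |a| \leq \|a\|_{L^{p_0}} |B|^{1-1/p_0} \leq |B|^{1/p_0} \|\chi_B\|_{p(\cdot)}^{-1} |B|^{1-1/p_0} = |B| \|\chi_B\|_{p(\cdot)}^{-1} = c\,\delta^{Q}\|\chi_B\|_{p(\cdot)}^{-1}$. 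Combining gives $|(X^I b)(z)| \lesssim \delta^{Q+D+1}\, \|\chi_B\|_{p(\cdot)}^{-1}\, \rho(z_0^{-1}\cdot z)^{-2n-d(I)-(D+1)}$. Since $2n = Q - 2$ and (because $D \geq \mathcal{D}_{p(\cdot)} \geq 0$, in fact here only $D \geq 0$ is needed, so one may take $D = 0$) we have $D + 1 \geq 1$, writing $D+1 = 1 + D$ the exponent of $\delta$ is $Q + 1 + D \geq Q + 1$ and that of $\rho(z_0^{-1}\cdot z)$ is $-(Q-2) - d(I) - (D+1) = -Q - d(I) - (D - 1)$; choosing the minimal admissible $D$ one reconciles this with the claimed $\delta^{2+Q} \rho^{-Q-d(I)}$.

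More precisely, to land exactly on the stated exponents I would run the argument using only cancellation up to homogeneous degree $1$ (which is available since $D \geq \mathcal{D}_{p(\cdot)} \geq 0$, and $2+Q = 2+2n+2$ forces, via the admissibility range for $D$ in the theorem we are heading toward, that $D \geq 1$; alternatively one simply fixes the remainder order at $1$ in the Taylor estimate), so the remainder gains a factor $\rho(z_0^{-1}\cdot w)^{2} \leq \delta^2$ and the $(D+1)$-th derivative is replaced by a second-order derivative giving $\rho(w^{-1}\cdot z)^{-2n-d(I)-2}$. Then the bracket is $\lesssim \delta^2 \rho(z_0^{-1}\cdot z)^{-2n - d(I) - 2}$, and multiplying by $\int_B|a| \lesssim \delta^Q \|\chi_B\|_{p(\cdot)}^{-1}$ and using $2n + 2 = Q$ yields exactly
\[
|(X^I b)(z)| \leq C_I\, \delta^{2+Q}\, \|\chi_B\|_{p(\cdot)}^{-1}\, \rho(z_0^{-1}\cdot z)^{-Q-d(I)},
\]
as claimed. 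The main obstacle I anticipate is the bookkeeping of the group Taylor expansion: one must be careful that $X^I$ are \emph{left}-invariant while the natural Taylor inequality \cite[Corollary 1.44]{Foll-St} involves right-invariant derivatives (or vice versa), so the mean-value estimate has to be applied to the correct set of vector fields acting on the correct variable, and one must verify that all intermediate points of the mean-value remainder indeed lie in a region where $\rho(w^{-1}\cdot z) \approx \rho(z_0^{-1}\cdot z)$ — this is exactly where the hypothesis $\rho(z_0^{-1}\cdot z) \geq 2\beta^2\delta$ and the constant $\beta \geq 1$ from \cite[Corollary 1.44]{Foll-St} are used. The remaining estimates (homogeneity of kernel derivatives, Hölder on the atom) are routine.
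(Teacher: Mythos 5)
Your proposal is correct and follows essentially the same route as the paper's (the paper only points to \cite[Lemma 19]{rocha5}, but that proof is exactly this: write $X^{I}b(z)=\int_{B}a(w)\,(X^{I}\rho^{-2n})(w^{-1}\cdot z)\,dw$, use the moment conditions to subtract the group Taylor polynomial of the kernel in $w$ about $z_{0}$, bound the remainder by \cite[Corollary 1.44]{Foll-St} together with the homogeneity of $X^{J}\rho^{-2n}$ and the fact that $\rho(z_{0}^{-1}\cdot z)\geq 2\beta^{2}\delta$ keeps all intermediate points comparably far from the singularity, and finish with H\"older and condition $a_{2})$).

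One point in your write-up deserves to be tightened rather than waved at: to obtain the exponent $\delta^{2+Q}$ (equivalently, a second-order Taylor remainder $\rho(z_{0}^{-1}\cdot w)^{2}\leq\delta^{2}$) you must use the vanishing of the moments $\int a(w)\,w^{J}\,dw$ for all $d(J)\leq 1$, i.e.\ you need $D\geq 1$. Your parenthetical ``available since $D\geq\mathcal{D}_{p(\cdot)}\geq 0$'' does not deliver this: when $p_{-}>(2n+2)/(2n+3)$ one has $\mathcal{D}_{p(\cdot)}=0$, and a $(p(\cdot),p_{0},0)$-atom has only mean zero, which yields the weaker bound $\delta^{1+Q}\rho(z_{0}^{-1}\cdot z)^{1-Q-d(I)}$. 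The correct resolution is simply to fix $D\geq\max\{1,\mathcal{D}_{p(\cdot)}\}$ in Definition \ref{atom}; this is harmless because the atomic decomposition of \cite{Fang} is valid for every such $D$, and it is what the application in Theorem \ref{main thm} implicitly does. With that convention your argument is complete.
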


\begin{proof} 
The proof is similar to the one given in \cite[Lemma 19]{rocha5}, but considering now the Definition \ref{atom} above.
\end{proof}

\begin{proposition} \label{main estimate}
Let $a(\cdot)$ be an $(p(\cdot), p_{0}, D)-$atom centered at the $\rho-$ball 
$B=B(z_0, \delta)$. If $b(z) = (a \ast c_n \rho^{-2n})(z)$, then for all $z \in \mathbb{H}^{n}$
\begin{eqnarray} 
\label{Nq2b}
N_{q, 2} \left(\widetilde{b}; \, z \right) &\lesssim& \| \chi_{B} \|_{p(\cdot)}^{-1} \left[(M \chi_{B})(z) \right]^{\frac{2 + Q/q}{Q}} + 
\chi_{4 \beta^2 B}(z) (M a)(z)  \\
\notag
&+& \chi_{4 \beta^2 B}(z) \sum_{d(I)=2} (T^{*}_{I} a)(z),
\end{eqnarray}
where $\widetilde{b}$ is the class of $b$ in $E^{q}_{1}$, $M$ is the Hardy-Littlewood maximal operator and $(T^{*}_{I} a) (z) = \sup_{\epsilon >0} \left|\int_{\rho(w^{-1} \cdot z) > \epsilon} \, (X^{I} \rho^{-2n})(w^{-1} \cdot z) a(w) \, dw \right|$.
\end{proposition}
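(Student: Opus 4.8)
The estimate for $N_{q,2}(\widetilde b;z)$ is obtained by splitting the analysis according to the position of $z$ relative to the ball $B = B(z_0,\delta)$. Recall that $N_{q,2}(\widetilde b;z) = \inf\{\eta_{q,2}(g;z): g \in \widetilde b\}$, so it suffices to exhibit one good representative near each point and bound $\eta_{q,2}$ of it. For $z$ far from $B$, say $\rho(z_0^{-1}\cdot z) \geq 2\beta^2\delta$, the idea is to use $b$ itself (or, better, $b$ minus its Taylor polynomial of homogeneous degree $1$ at the relevant center) as representative, and control $\eta_{q,2}$ by the pointwise derivative bounds from Lemma \ref{b function}. For $z$ close to $B$, i.e. $z \in 4\beta^2 B$, the tail contributions are handled by maximal operators acting on $a$: the zeroth-order term gives the Hardy–Littlewood maximal function $Ma$, and the second-order terms, coming from $\mathcal{L}b = a$ together with the explicit kernel $(X^I\rho^{-2n})$ with $d(I)=2$, give the truncated singular integrals $T^*_I a$.

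The plan is to carry this out in the following order. First I would fix $z \in \mathbb{H}^n$ and a radius $r>0$, and estimate $r^{-2}|b|_{q,B(z,r)}$ (for a suitable representative of $\widetilde b$), distinguishing the cases $r$ small versus $r$ large and $z$ near versus far from $B$. For the case $\rho(z_0^{-1}\cdot z) \geq 2\beta^2\delta$: if $r \leq \tfrac12\rho(z_0^{-1}\cdot z)$, then $B(z,r)$ stays in the region where Lemma \ref{b function} applies, and subtracting the degree-$1$ Taylor polynomial of $b$ at $z$ lets one bound $|b - P|_{q,B(z,r)}$ by $r^2$ times a supremum of second-order derivatives $|X^I b|$ with $d(I)=2$ over $B(z,r)$, which by Lemma \ref{b function} is $\lesssim r^2 \delta^{2+Q}\|\chi_B\|_{p(\cdot)}^{-1}\rho(z_0^{-1}\cdot z)^{-Q-2}$; after multiplying by $r^{-2}$ this is $\lesssim \delta^{2+Q}\|\chi_B\|_{p(\cdot)}^{-1}\rho(z_0^{-1}\cdot z)^{-Q-2}$. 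If $r > \tfrac12\rho(z_0^{-1}\cdot z)$, then $B \subset C B(z,r)$ for an absolute constant $C$, and one can throw in the moment conditions $a_3)$ of the atom: writing $b = a \ast c_n\rho^{-2n}$ and using that $a$ has vanishing moments up to degree $D \geq \mathcal{D}_{p(\cdot)} \geq 1$, one estimates $|b|_{q,B(z,r)}$ (with the degree-$1$ polynomial removed) by a cancellation argument in terms of $\|a\|_1$ and $r$; the bound $\|a\|_1 \lesssim |B|\,\|\chi_B\|_{p(\cdot)}^{-1}$ from $a_2)$ and Hölder then yields $r^{-2}|b|_{q,B(z,r)} \lesssim \delta^{Q+2}\|\chi_B\|_{p(\cdot)}^{-1}r^{-Q-2} \lesssim \delta^{Q+2}\|\chi_B\|_{p(\cdot)}^{-1}\rho(z_0^{-1}\cdot z)^{-Q-2}$. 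Taking the supremum over $r$ in both subcases gives $N_{q,2}(\widetilde b;z) \lesssim \delta^{Q+2}\|\chi_B\|_{p(\cdot)}^{-1}\rho(z_0^{-1}\cdot z)^{-Q-2}$, which one rewrites as $\|\chi_B\|_{p(\cdot)}^{-1}[(M\chi_B)(z)]^{(2+Q/q)/Q}$ by comparing $(M\chi_B)(z) \approx (\delta/\rho(z_0^{-1}\cdot z))^Q$ for $z$ outside a fixed dilate of $B$ — here one checks the exponent arithmetic $Q \cdot \tfrac{2+Q/q}{Q} = 2 + Q/q \leq Q+2$ so that the power of $\rho(z_0^{-1}\cdot z)$ matches up, using $q>1$. (For $z \in 2\beta^2 B$ outside, or rather for the complementary near region, $(M\chi_B)(z) \approx 1$ and the first term on the right of \eqref{Nq2b} dominates whatever the far estimate would give.)

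Second, I would treat $z \in 4\beta^2 B$. Here one still needs to bound $r^{-2}|b|_{q,B(z,r)}$ for all $r$. For $r$ comparable to or larger than $\delta$ one is essentially back in the "far-ish" regime and the $(M\chi_B)(z)\approx 1$ term of the desired inequality absorbs the contribution via the global decay estimates together with $a_2)$. For $r \lesssim \delta$ the main point is the identity $\mathcal{L}b = a$: one represents $b$ near $z$, subtracts its degree-$1$ Taylor polynomial at $z$, and writes the resulting Taylor remainder using an integral formula involving second-order left-invariant derivatives of $b$. Since $\mathcal{L} = -\sum X_i^2$ recovers $\sum_{d(I)=2}$(combinations of) $X^I b$, and $X^I b = a \ast X^I(c_n\rho^{-2n})$ with $X^I(\rho^{-2n})$ a Calderón–Zygmund kernel homogeneous of degree $-Q$, one expresses $X^I b$ pointwise as a principal-value singular integral of $a$ plus a multiple of $a$ itself. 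Averaging the $L^q$ norm of this over $B(z,r)$ and using that $a$ has no cancellation issues at this scale, the $r^{-2}$ factor is compensated (because the remainder is genuinely order $r^2$) and one is left, after taking $\sup_r$, with $(Ma)(z) + \sum_{d(I)=2}(T^*_I a)(z)$ up to constants, multiplied by $\chi_{4\beta^2 B}(z)$. Finally I would combine the near and far estimates: for every $z$, $N_{q,2}(\widetilde b;z)$ is bounded by the maximum (hence, up to a factor, the sum) of the two regional bounds, which is exactly the right-hand side of \eqref{Nq2b}.

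\textbf{Main obstacle.} The delicate step is the near-region analysis for small $r$, where one must convert the second-order Taylor remainder of $b$ into the truncated singular integrals $T^*_I a$ cleanly. The subtleties are: (i) correctly handling the non-commutative Taylor expansion on $\mathbb{H}^n$ (the stratified Taylor formula of Folland–Stein, \cite[Corollary 1.44]{Foll-St}, which is why the constant $\beta$ enters and the balls are dilated by $4\beta^2$), so that the remainder is controlled by $\sup$ of $X^I b$ with $d(I)=2$ over a ball of radius $\beta^2 r$ around $z$; (ii) passing from that supremum of $X^I b$ to an $L^q$-average over $B(z,r)$ without losing the gain, which forces one to keep the singular integral in truncated form $T^*_I$ rather than taking it pointwise, and to invoke the $L^q$-boundedness only implicitly; and (iii) making sure the truncation parameter $\epsilon$ in $T^*_I$ is chosen compatibly with $r$ so that the "local" piece $\rho(w^{-1}\cdot z) \leq \epsilon$ contributes only the $Ma(z)$ term. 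This is precisely the place where the argument parallels \cite[Lemma 19]{rocha5} and \cite[Proposition 15]{rocha1}, and where care with the homogeneous-degree bookkeeping $d(I)=2$, $\gamma = 2$, $Q = 2n+2$ is essential.
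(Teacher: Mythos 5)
Your plan follows essentially the same route as the paper, whose proof of Proposition \ref{main estimate} simply transplants the argument of \cite[Proposition 20]{rocha5}: split according to whether $z$ lies in $4\beta^{2}B$ or outside a fixed dilate of $B$, use Lemma \ref{b function} together with the stratified Taylor inequality of \cite[Corollary 1.44]{Foll-St} in the far region, and use $\mathcal{L}b=a$ together with the Calder\'on--Zygmund kernels $X^{I}\rho^{-2n}$, $d(I)=2$, in the near region. Your identification of the delicate points (the role of $\beta$, the compatibility of the truncation in $T_{I}^{*}$ with the radius $r$, the local piece producing $Ma$) is accurate.

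One intermediate claim is, however, false as stated. In the far region with $r>\tfrac12\rho(z_0^{-1}\cdot z)$ you assert $r^{-2}|b|_{q,B(z,r)}\lesssim\delta^{Q+2}\|\chi_{B}\|_{p(\cdot)}^{-1}r^{-Q-2}$. This decay rate is not attainable: such a ball $B(z,r)$ contains a fixed neighbourhood of $B$ on which $|b|$ is genuinely of size $\approx\delta^{2}\|\chi_{B}\|_{p(\cdot)}^{-1}$ (consistent with Lemma \ref{b function} at $\rho(z_0^{-1}\cdot w)\approx\delta$), so the $L^{q}$ mean of $|b-P|$ over $B(z,r)$ cannot decay faster than $(|B|/|B(z,r)|)^{1/q}\approx(\delta/r)^{Q/q}$ times that size; no choice of $P\in\mathcal{P}_{1}$ and no cancellation of $a$ improves this, because the obstruction comes from the normalization of the $L^{q}$ average, not from the pointwise size of $b$. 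The correct bound in that regime is $r^{-2}\|\widetilde b\|_{q,B(z,r)}\lesssim\|\chi_{B}\|_{p(\cdot)}^{-1}(\delta/r)^{2+Q/q}$, and this is precisely why the exponent $\tfrac{2+Q/q}{Q}$ (rather than $\tfrac{Q+2}{Q}$) appears on $M\chi_{B}$ in \eqref{Nq2b}: it is the sharp rate forced by $L^{q}$-averaging, not slack introduced through the inequality $2+Q/q\le Q+2$ as your exponent check suggests. Since the corrected bound still equals $\|\chi_{B}\|_{p(\cdot)}^{-1}\left[(M\chi_{B})(z)\right]^{(2+Q/q)/Q}$ up to constants, the conclusion and the overall architecture of your proof survive; only this step needs to be rewritten.
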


\begin{proof} We point out that the argument used in the proof of Proposition 20 in \cite{rocha5}, to obtain the pointwise inequality
(16) there, works in this setting as well, but considering now the conditions $a1)$, $a2)$ and $a3)$ given in Definition 
\ref{atom} of $(p(\cdot), p_0, D)-$atom. These conditions are similar to those of the atoms in classical context (see p. 71-72 
in \cite{Foll-St}). Then, this observation and Lemma \ref{b function} allow us to get (\ref{Nq2b}).
\end{proof}

\section{Main results}

In this section we will prove our main results.

\begin{theorem} \label{main thm} 
Let $p(\cdot)$ be an exponent that belongs to $\mathcal{P}^{\log}(\mathbb{H}^{n})$ and $1 < q < \frac{n+1}{n}$. 
If $\underline{p} > Q (2 + Q/q)^{-1}$, then the sublaplacian $\mathcal{L}$ is a bijective mapping from 
$\mathcal{H}^{p(\cdot)}_{q, 2}(\mathbb{H}^{n})$ onto $H^{p(\cdot)}(\mathbb{H}^{n})$. Moreover, there exist two positive constants 
$c_1$ and $c_2$ such that
\[
c_1 \|G \|_{\mathcal{H}^{p(\cdot)}_{q, 2}(\mathbb{H}^{n})} \leq \| \mathcal{L}G \|_{H^{p(\cdot)}(\mathbb{H}^{n})} \leq 
c_2 \|G \|_{\mathcal{H}^{p(\cdot)}_{q, 2}(\mathbb{H}^{n})}
\]
hold for all $G \in \mathcal{H}^{p(\cdot)}_{q, 2}(\mathbb{H}^{n})$.
\end{theorem}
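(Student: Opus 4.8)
The plan is to prove bijectivity and the two-sided norm estimate separately, treating the upper bound $\|\mathcal{L}G\|_{H^{p(\cdot)}} \lesssim \|G\|_{\mathcal{H}^{p(\cdot)}_{q,2}}$ together with injectivity, and the lower bound $\|G\|_{\mathcal{H}^{p(\cdot)}_{q,2}} \lesssim \|\mathcal{L}G\|_{H^{p(\cdot)}}$ together with surjectivity. For the \emph{upper bound and injectivity}, fix $G \in \mathcal{H}^{p(\cdot)}_{q,2}(\mathbb{H}^n)$ and choose, for a.e. $z_0$, a representative $g \in G$ with $N_{q,2}(G;z_0) = \eta_{q,2}(g;z_0)$; by Proposition \ref{Lg dist} applied to $f = \mathcal{L}G = \mathcal{L}g$ with a fixed $\phi \in \mathcal{F}_N$ (suitably normalized, $N > Q+2$), one gets $(M^{dis}_\phi f)(z) \lesssim \eta_{q,2}(g;z) = N_{q,2}(G;z)$ for a.e.\ $z$, hence $M^{dis}_\phi(\mathcal{L}G) \lesssim N_{q,2}(G;\cdot)$ pointwise a.e. Taking $L^{p(\cdot)}$-norms and invoking the standard equivalence of the discrete maximal function with $\mathcal{M}_N$ in the variable Hardy space (this is where $p(\cdot) \in \mathcal{P}^{\log}$ is used) yields $\|\mathcal{L}G\|_{H^{p(\cdot)}} \lesssim \|N_{q,2}(G;\cdot)\|_{p(\cdot)} = \|G\|_{\mathcal{H}^{p(\cdot)}_{q,2}}$. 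Injectivity then follows: if $\mathcal{L}G = 0$ in $\mathcal{S}'$, any representative $g$ is harmonic hence (being in $\mathcal{S}'(\mathbb{H}^n)$ with controlled growth via $\eta_{q,2}(g;\cdot) \in L^{p(\cdot)}$) a polynomial of homogeneous degree $\leq 1$, so $G = 0$ in $E^q_1$.

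For the \emph{lower bound}, given $f \in H^{p(\cdot)}(\mathbb{H}^n)$, use Theorem 4.5 of \cite{Fang} to write $f = \sum_j \lambda_j a_j$ in $\mathcal{S}'$ with $(p(\cdot), p_0, D)$-atoms $a_j$ centered at $\rho$-balls $B_j = B(z_j, \delta_j)$, $D \geq \mathcal{D}_{p(\cdot)}$ large enough (in particular $D \geq 2$ to kill the quadratic moments needed by Proposition \ref{main estimate}), and $\mathcal{A}(\{\lambda_j\},\{B_j\},p(\cdot)) \lesssim \|f\|_{H^{p(\cdot)}}$. Set $b_j = a_j \ast c_n\rho^{-2n}$, so $\mathcal{L}b_j = a_j$; define $F = \sum_j \lambda_j \widetilde{b_j}$ in $E^q_1$ and check the series converges in $\mathcal{H}^{p(\cdot)}_{q,2}$, so that $\mathcal{L}F = f$. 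The heart of the matter is to bound $N_{q,2}(F;\cdot)$; by subadditivity of $N_{q,2}$ (up to the $\underline{p}$-inequality) and Proposition \ref{main estimate}, $N_{q,2}(F;z)^{\underline{p}} \lesssim \sum_j \lambda_j^{\underline{p}} N_{q,2}(\widetilde{b_j};z)^{\underline{p}}$, and the latter splits into three pieces: the "far" term $\|\chi_{B_j}\|_{p(\cdot)}^{-1}[M\chi_{B_j}(z)]^{(2+Q/q)/Q}$, and two "near" terms $\chi_{4\beta^2 B_j}(z)(Ma_j)(z)$ and $\chi_{4\beta^2 B_j}(z)\sum_{d(I)=2}(T^*_I a_j)(z)$.

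To finish, I would estimate the $L^{p(\cdot)}$-norm of each of the three pieces. For the far term, write $[M\chi_{B_j}]^{(2+Q/q)/Q} = [M\chi_{B_j}]^{\theta}$ with $\theta = (2+Q/q)/Q > 1$; the hypothesis $\underline{p} > Q(2+Q/q)^{-1}$ is exactly $\theta\underline{p} > 1$, which (after raising to a power, via item 4 of the norm properties and Lemma \ref{ineq p star} with $p_* = \underline{p}/\theta < \underline{p}$, together with the Fefferman–Stein vector-valued maximal inequality valid on $L^{p(\cdot)/p_*}$ since $p(\cdot) \in \mathcal{P}^{\log}$) lets one control $\big\|\{\sum_j(\lambda_j\|\chi_{B_j}\|_{p(\cdot)}^{-1}[M\chi_{B_j}]^{\theta})^{\underline{p}}\}^{1/\underline{p}}\big\|_{p(\cdot)}$ by $\mathcal{A}(\{\lambda_j\},\{B_j\},p(\cdot)) \lesssim \|f\|_{H^{p(\cdot)}}$. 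For the two near terms, the cutoffs $\chi_{4\beta^2 B_j}$ confine everything to dilates of $B_j$; using the $L^{p_0}$-boundedness of $M$ and of the truncated singular integrals $T^*_I$ (Calderón–Zygmund theory on the space of homogeneous type $\mathbb{H}^n$), together with atom condition $a_2)$, one gets $\|Ma_j\|_{p_0} \lesssim |B_j|^{1/p_0}\|\chi_{B_j}\|_{p(\cdot)}^{-1}$ and similarly for $T^*_I a_j$; then Proposition \ref{b_j functions} (with $s = p_0$, $p_* < \underline{p}$, $sp_* > p_+$, $A_j = \|\chi_{B_j}\|_{p(\cdot)}^{-1}$) converts $\|\sum_j \lambda_j (Ma_j)\|_{p(\cdot)}$-type quantities into $\|\sum_j A_j\lambda_j\chi_{B_j}\|_{p(\cdot)}$-type quantities, which are again $\lesssim \mathcal{A} \lesssim \|f\|_{H^{p(\cdot)}}$. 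Surjectivity is then immediate since $\mathcal{L}F = f$. \textbf{The main obstacle} I anticipate is the far-term estimate: making the exponent bookkeeping rigorous — passing from $N_{q,2}(\widetilde b_j;\cdot)^{\underline p}$ through the power-rescaling $\|\cdot\|_{p(\cdot)}^{\underline p} = \|\,|\cdot|^{\underline p}\,\|_{p(\cdot)/\underline p}$, applying Lemma \ref{ineq p star} and the vector-valued maximal inequality on the rescaled exponent $p(\cdot)/p_*$, and checking that the constraint $\theta\underline{p}>1$ is precisely what is needed and nothing more — is the delicate quantitative core, and also the only place where the sharpness of the range $Q(2+Q/q)^{-1} < \underline p$ becomes visible.
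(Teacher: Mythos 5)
Your proposal follows essentially the same route as the paper: the upper bound via Proposition \ref{Lg dist} and the maximal characterization of $H^{p(\cdot)}(\mathbb{H}^{n})$, and surjectivity together with the lower bound via the atomic decomposition, the potentials $b_j = a_j \ast c_n\rho^{-2n}$, and the three-term estimate of Proposition \ref{main estimate}, with the far term controlled exactly by $(2+Q/q)\,\underline{p} > Q$ through the vector-valued maximal inequality and the near terms by Proposition \ref{b_j functions} and Lemma \ref{ineq p star}. The only point of divergence is injectivity: the paper localizes via the open set $\{z : N_{q,2}(G;z)>1\}$ to reduce to the constant-exponent argument of \cite{rocha5}, whereas your Liouville-type sketch is viable but should note that finiteness of $\eta_{q,2}(g;\cdot)$ alone only bounds the homogeneous degree by $2$, and it is the $L^{p(\cdot)}$-integrability of $N_{q,2}(G;\cdot)$ (with $p_+<\infty$) that excludes degree exactly $2$.
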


\begin{proof} Let $G \in \mathcal{H}^{p(\cdot)}_{q, \, 2}(\mathbb{H}^{n})$. Since $N_{q, 2}(G; z)$ is finite 
$\text{a.e.} \,\, z \in \mathbb{H}^{n}$,  by $(i)$ in Lemma \ref{puntual 1} and Proposition \ref{g distrib}, the unique representative 
$g$ of $G$ (which depends on $z$), satisfying $\eta_{q, 2}(g; z)=N_{q, 2}(G; z)$, is a function in 
$L^{q}_{loc}(\mathbb{H}^{n}) \cap \mathcal{S}'(\mathbb{H}^{n})$. In particular, for a radial function $\phi \in \mathcal{S}(\mathbb{H}^n)$ with $\int \phi = 1$, by Proposition \ref{Lg dist}, we get
\[
M_{\phi}^{dis}(\mathcal{L}G)(z) \leq C_{\phi} \,\, N_{q, \, 2}(G; z).
\]
Thus, this inequality and Theorem 3.2 in \cite{Fang} give $\mathcal{L}G \in H^{p(\cdot)}(\mathbb{H}^{n})$ and 
\begin{equation} \label{continuity}
\| \mathcal{L}G \|_{H^{p(\cdot)}(\mathbb{H}^{n})} \leq C \, \| G \|_{\mathcal{H}^{p(\cdot)}_{q, \, 2}(\mathbb{H}^{n})}.
\end{equation}
This proves the continuity of sublaplacian $\mathcal{L}$ from $\mathcal{H}^{p(\cdot)}_{q, \, 2}(\mathbb{H}^{n})$ into 
$H^{p(\cdot)}(\mathbb{H}^{n})$.

Now we shall see that the operator $\mathcal{L}$ is onto. By Theorem \ref{var decatomic}, given $f \in H^{p(\cdot)}(\mathbb{H}^{n})$ there exists a sequence of nonnegative numbers $\{ \lambda_j \}_{j=1}^{\infty}$ and a sequence of $\rho-$balls $\{B_j \}_{j=1}^{\infty}$ (with the bounded intersection property) and $(p(\cdot), p_0, D)-$atoms $a_j$ centered at $B_j$, such that $f= \sum_{j=1}^{\infty} \lambda_j a_j$ and
\[
\mathcal{A}\left( \left\{ \lambda_{j}\right\}_{j=1}^{\infty },\left\{ B_{j}\right\} _{j=1}^{\infty },  p(\cdot)\right) 
\lesssim \|f \|_{H^{p(\cdot)}(\mathbb{H}^{n})}.
\]
For each $j \in \mathbb{N}$ we put
$b_j(z)= (a_j \ast c_n \rho^{-2n})(z)$, from Proposition \ref{main estimate} we have
\begin{eqnarray*} 
\label{N estimate}
N_{q, 2} \left(\widetilde{b}_j; \, z \right) &\lesssim& \| \chi_{B_j} \|_{p(\cdot)}^{-1} \left[(M \chi_{B_j})(z) \right]^{\frac{2 + Q/q}{Q}} + \chi_{4 \beta^2 B}(z) (M a_j)(z)  \\
&+& \chi_{4 \beta^2 B_j}(z) \sum_{d(I)=2} (T^{*}_{I} a_j)(z)
\end{eqnarray*}
Thus
\begin{eqnarray*}
\sum_{j=1}^{\infty} k_j N_{q, 2} \left(\widetilde{b}_j; \, z \right) &\lesssim& \sum_{j=1}^{\infty} \lambda_j 
\| \chi_{B_j} \|_{p(\cdot)}^{-1} 
\left[(M \chi_{B_j})(z) \right]^{\frac{2 + Q/q}{Q}} \\
&+& \sum_{j=1}^{\infty} \lambda_j \chi_{4 \beta^2 B_j}(z) (M a_j)(z) \\
&+& \sum_{j=1}^{\infty} \lambda_j \chi_{4 \beta^2 B_j}(z) \sum_{d(I)=2} (T^{*}_{I} a_j)(z) \\
&=& I + II + III.
\end{eqnarray*}
To study $I$, by hypothesis, we have that $(2 + Q/q) \underline{p} > Q $. Then
\[
\|I\|_{p(\cdot)} = \left\| \sum_{j=1}^{\infty} \lambda_j \| \chi_{B_j} \|_{p(\cdot)}^{-1} 
\left[(M\chi_{B_j})(\cdot) \right]^{\frac{2 + Q/q}{Q}} \right\|_{p(\cdot)}
\]
\[
= \left\| \left\{ \sum_{j=1}^{\infty} \lambda_j \| \chi_{B_j} \|_{p(\cdot)}^{-1} \left[(M\chi_{B_j})(\cdot) \right]^{\frac{2 + Q/q}{Q}} \right\}^{\frac{Q}{2 + Q/q}} \right\|_{\frac{2 + Q/q}{Q}p(\cdot)}^{\frac{2 + Q/q}{Q}}
\]
\[
\lesssim \left\| \left\{ \sum_{j=1}^{\infty} \lambda_j \| \chi_{B_j} \|_{p(\cdot)}^{-1} \chi_{B_j}(\cdot)  \right\}^{\frac{Q}{2 + Q/q}} \right\|_{\frac{2 + Q/q}{Q}p(\cdot)}^{\frac{2 + Q/q}{Q}}
\]
\[
= \left\| \sum_{j=1}^{\infty} \lambda_j \| \chi_{B_j} \|_{p(\cdot)}^{-1} 
\chi_{B_j}(\cdot) \right\|_{p(\cdot)}
\]
\[
\lesssim \left\| \left\{ \sum_{j=1}^{\infty} \left( \lambda_j \| \chi_{B_j} \|_{p(\cdot)}^{-1} \chi_{B_j}(\cdot) \right)^{\underline{p}}  \right\}^{1/ \underline{p}} \right\|_{p(\cdot)} 
\]
\[
= \mathcal{A}\left( \left\{ \lambda_{j}\right\}_{j=1}^{\infty },\left\{ B_{j}\right\} _{j=1}^{\infty }, p(\cdot)\right)
\lesssim \|f \|_{H^{p(\cdot)}},
\]
where the first inequality follows from Theorem \ref{vector max ineq}, since $(2 + Q/q) \underline{p} > Q $ and $(2+ Q/q)/Q > 1$. The embedding 
$\ell^{\underline{p}} = \ell^{\min \{ p_{-}, 1 \}} \hookrightarrow \ell^{1}$ gives the second inequality.

\qquad

To study $II$, let $0 < p_{*} < \underline{p}$ be fixed and $p_{0} > \max \{ p_{+}, 1 \}$. Since the Hardy-Littlewood maximal operator $M$ is bounded on $L^{p_0}(\mathbb{H}^n)$ (see \cite[Theorem 1, p. 13]{Elias}), we have
\[
\|  (Ma_j)^{p_{*}} \|_{L^{p_{0}/p_{*}}(4 \beta B_j)} \lesssim \| a_j \|_{p_0}^{p_{*}} \lesssim 
\frac{ |B_j |^{p_{*}/p_0}}{\| \chi_{B_j} \|_{p(\cdot)}^{p_{*}}} 
\lesssim \frac{ |4\beta B_j |^{p_{*}/p_0}}{\| \chi_{4\beta B_j} \|_{p(\cdot)/p_{*}}},
\]
where the third inequality holds since the quantities $\| \chi_{4\beta B_j} \|_{p(\cdot)}$ and $\| \chi_{B_j} \|_{p(\cdot)}$ are 
comparable (see Lemma \ref{Lp size}). Now, since $0 < p_{*} < 1$, we apply the $p_{\ast}$-inequality and Proposition 
\ref{b_j functions} with $b_j = \left( \chi_{4\beta B_{j}} \, (Ma_j)^{p_{*}} \right)$, $A_j = \left\| \chi_{4\beta B_{j}} \right\|_{L^{p(\cdot)/p_{*}}}^{-1}$ and $s= p_0/p_{*}$, to obtain
\begin{eqnarray*}
\| II \|_{L^{p(\cdot)}} &\lesssim& \left\| \sum_{j} \left(\lambda_j \, \chi_{4\beta B_{j}} \, Ma_j \right)^{p_{*}} 
\right\|^{1/p_{*}}_{L^{p(\cdot)/p_{*}}} \\
&\lesssim& \left\| \sum_{j} \left( \frac{\lambda_j}{\left\| \chi_{4\beta B_{j}} \right\|_{L^{p(\cdot)}}} \right)^{p_{*}} 
\chi_{4\beta B_{j}} \right\|^{1/p_{*}}_{L^{p(\cdot)/p_{*}}}.
\end{eqnarray*}
It is easy to check that $\chi_{4\beta B_{j}} \leq (M\chi_{B_j})^{2}$. From this inequality, Theorem \ref{vector max ineq} and Lemma \ref{Lp size}, we have
\begin{eqnarray*}
\| II \|_{L^{p(\cdot)}} &\lesssim& \left\| \left\{ \sum_{j} \left( \frac{\lambda_j^{p_{*}/2}}{\left\| \chi_{B_{j}} 
\right\|^{p_{*}/2}_{L^{p(\cdot)}}} (M\chi_{B_j}) \right)^{2}  \right\}^{1/2} \right\|^{2/p_{*}}_{L^{2p(\cdot)/p_{*}}} \\
&\lesssim& \left\| \left\{ \sum_{j} \left( \frac{ \lambda_j \chi_{B_j}}{\| \chi_{B_j} \|_{L^{p(\cdot)}}} \right)^{p_{*}} 
\right\}^{1/p_{*}} \right\|_{L^{p(\cdot)}}.
\end{eqnarray*}
Finally, Lemma \ref{ineq p star} gives
\[
\| II \|_{L^{p(\cdot)}} \lesssim \mathcal{A}\left( \{ \lambda_j \}_{j=1}^{\infty}, \{ B_j \}_{j=1}^{\infty}, p(\cdot) \right) 
\lesssim \| f \|_{H^{p(\cdot)}}.
\]

Now, we study $III$, by Theorem 3 in \cite{Folland} and Corollary 2, p. 36, in \cite{Elias} (see also {\bf 2.5}, p. 11, in \cite{Elias}), 
we have, for every multi-index $I$ with $d(I) = 2$, that the operator $T_{I}^{*}$ is bounded on $L^{p_0}(\mathbb{H}^n)$ for every 
$1 < p_0 < \infty$. Proceeding as in the estimate of $II$, we get
\[
\| III\|_{L^{p(\cdot)}} \lesssim \mathcal{A}\left( \{ \lambda_j \}_{j=1}^{\infty}, \{ B_j \}_{j=1}^{\infty}, p(\cdot) \right) 
\lesssim \| f \|_{H^{p(\cdot)}}.
\]
Thus, we have
$$\left\| \sum_{j=1}^{\infty} \lambda_j N_{q, 2}(\widetilde{b}_j; \, \cdot) \right\|_{p(\cdot)} \lesssim \|f \|_{H^{p(\cdot)}}.$$
By Lemma \ref{modular}, we obtain $\kappa_{p(\cdot)}\left( \sum_{j=1}^{\infty} \lambda_j N_{q, 2}(\widetilde{b}_j; \cdot) \right) < \infty$. Hence
\begin{equation}
\sum_{j=1}^{\infty} \lambda_j N_{q, 2}(\widetilde{b}_j; \, z) < \infty \,\,\,\,\,\, \text{a.e.} \, z \in \mathbb{H}^{n} \label{Nq}
\end{equation}
and
\begin{equation}
\kappa_{p(\cdot)}\left( \sum_{j=M+1}^{\infty} \lambda_j N_{q, 2}(\widetilde{b}_j; \, \cdot) \right) \rightarrow 0, \,\,\,\, \text{as} \,\,  M \rightarrow \infty  \label{Nq2}.
\end{equation}
From (\ref{Nq}) and Lemma \ref{series in Eqk}, there exists a function $G$ such that $\sum_{j=1}^{\infty} \lambda_j \widetilde{b}_j = G$ 
in $E^{q}_{1}$ and
\[
N_{q, 2} \left( \left(G - \sum_{j=1}^{M} \lambda_j \widetilde{b}_j \right) ; \, z \right) \leq c \, 
\sum_{j=M+1}^{\infty} \lambda_j N_{q, 2}(\widetilde{b}_j; z).
\]
This estimate together with (\ref{Nq2}) and Lemma \ref{modular a cero} implies
\[
\left\| G - \sum_{j=1}^{M} \lambda_j \widetilde{b}_j \right\|_{\mathcal{H}^{p(\cdot)}_{q,2}} \rightarrow 0, \,\,\,\, \text{as} \,\,  M \rightarrow \infty.
\]
By proposition \ref{completeness}, we have that $G \in \mathcal{H}^{p(\cdot)}_{q,2}(\mathbb{H}^{n})$ and 
$G = \sum_{j=1}^{\infty} \lambda_j \widetilde{b}_j$ in $\mathcal{H}^{p(\cdot)}_{q,2}(\mathbb{H}^{n})$. Since $\mathcal{L}$ is a continuous operator from $\mathcal{H}^{p(\cdot)}_{q,2}(\mathbb{H}^{n})$ into $H^{p(\cdot)}(\mathbb{H}^{n})$, we get
\[
\mathcal{L}G = \sum_j \lambda_j \mathcal{L} \widetilde{b}_j = \sum_j \lambda_j a_j = f,
\]
in $H^{p(\cdot)}(\mathbb{H}^{n})$. This shows that $\mathcal{L}$ is onto $H^{p(\cdot)}(\mathbb{H}^{n})$. Moreover,
\[
\| G\|_{\mathcal{H}^{p(\cdot)}_{q,2}} = \left\| \sum_{j=1}^{\infty} \lambda_j \widetilde{b}_j \right\|_{\mathcal{H}^{p(\cdot)}_{q,2}} \lesssim \left\| \sum_{j=1}^{\infty} \lambda_j N_{q, 2}(\widetilde{b}_j; \cdot) \right\|_{p(\cdot)} \lesssim \|f \|_{H^{p(\cdot)}} = \| \mathcal{L} G \|_{H^{p(\cdot)}}.
\]
For to conclude the proof, we will show that the operator $\mathcal{L}$ is injective on $\mathcal{H}^{p(\cdot)}_{q, \, 2}$. Let 
$\mathcal{O} = \{ z : N_{q, 2m}(G; z) > 1\}$. The set $\mathcal{O}$ is open because of that $N_{q, 2}(G; \cdot)$ is lower semicontinuous.
We take a constant $r>0$  such that $r \geq \max\{q, p_{+}\}$. Since $N_{q, 2}(G; \cdot) \in L^{p(\cdot)}(\mathbb{H}^{n})$ it follows that 
$|\mathcal{O}|$ is finite and $N_{q, 2}(G; \cdot) \in L^{r}(\mathbb{H}^{n} \setminus \mathcal{O})$ thus following the proof of Theorem 18 in \cite{rocha5} we have that if $G \in \mathcal{H}^{p(\cdot)}_{q, 2}(\mathbb{H}^{n})$ and $\mathcal{L} G = 0$, then $G \equiv 0$. This proves the injectivity of $\mathcal{L}$. 
\end{proof}

Therefore, Theorem \ref{main thm} allows us to conclude, for $Q(2+Q/q)^{-1} < \underline{p}$, that the equation
\[ 
\mathcal{L} F = f, \,\,\,\,\,\, f \in H^{p(\cdot)}(\mathbb{H}^n) 
\]
has a unique solution in $\mathcal{H}^{p(\cdot)}_{q, 2}(\mathbb{H}^{n})$, namely: $F := \mathcal{L}^{-1}f$.

\

We shall now see that the case $0 < p_{+} \leq Q \, (2 + \frac{Q}{q})^{-1}$ is trivial.

\begin{theorem} \label{second thm}
If $p(\cdot)$ is an exponent function on $\mathbb{H}^n$ such that $p_{+} \leq Q (2 + Q/q)^{-1}$, then 
$\mathcal{H}^{p(\cdot)}_{q, \, 2}(\mathbb{H}^{n}) = \{ 0 \}.$
\end{theorem}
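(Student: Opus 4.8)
The plan is to argue by contradiction. Suppose $G \in \mathcal{H}^{p(\cdot)}_{q, 2}(\mathbb{H}^{n})$ with $G \neq 0$ in $E^{q}_{1}$; I will produce a pointwise lower bound for $N_{q,2}(G;\cdot)$ at infinity that is incompatible with the modular inequality of Lemma \ref{modular} once $p_{+} \leq Q(2+Q/q)^{-1}$. Concretely, the goal is the estimate $N_{q,2}(G;z) \geq c_{0}\,\rho(z)^{-(2+Q/q)}$ for all $z$ with $\rho(z)$ large, with $c_{0}>0$; everything else is then a short computation.

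First I would record that a nonzero $G\in E^{q}_{1}$ admits a $\rho$-ball $B_{*}=B(z_{*},\delta_{*})$ with $\|G\|_{q,B_{*}}>0$: if $\|G\|_{q,B}=0$ for every $\rho$-ball $B$, then on each $B$ a representative $g$ agrees a.e.\ with some element of $\mathcal{P}_{1}$, and comparing these polynomials on an increasing exhaustion of $\mathbb{H}^{n}$ by balls forces $g\in\mathcal{P}_{1}$, i.e.\ $G=0$. Fix such a $B_{*}$, set $\mu_{0}=\|G\|_{q,B_{*}}>0$, and let $g$ be any representative of $G$. For $z\in\mathbb{H}^{n}$ put $R=\rho(z_{*}^{-1}\cdot z)$ and $r_{z}=\beta(R+\delta_{*})$. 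Using $\rho(u^{-1})=\rho(u)$ and the quasi-triangle inequality for the Koranyi norm, one checks $B_{*}\subset B(z,r_{z})$. Hence, for every $P\in\mathcal{P}_{1}$ (note $g-P$ is again a representative of $G$, so $|g-P|_{q,B_{*}}\geq\mu_{0}$),
\[
|g-P|_{q,B(z,r_{z})}^{q}\;\geq\;\frac{1}{|B(z,r_{z})|}\int_{B_{*}}|g-P|^{q}\;\geq\;\frac{|B_{*}|}{|B(z,r_{z})|}\,\mu_{0}^{q}\;=\;\frac{\delta_{*}^{Q}}{r_{z}^{Q}}\,\mu_{0}^{q}.
\]
Therefore $\eta_{q,2}(g-P;z)\geq r_{z}^{-2}|g-P|_{q,B(z,r_{z})}\geq\mu_{0}\,\delta_{*}^{Q/q}\,r_{z}^{-(2+Q/q)}$ for every $P$, and taking the infimum over $P\in\mathcal{P}_{1}$ gives $N_{q,2}(G;z)\geq\mu_{0}\,\delta_{*}^{Q/q}\,r_{z}^{-(2+Q/q)}$. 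A second use of the quasi-triangle inequality yields $r_{z}\leq C_{1}\,\rho(z)$ for all $z$ with $\rho(z)\geq R_{0}$, where $C_{1},R_{0}$ depend only on $\beta,\rho(z_{*}),\delta_{*}$; this is exactly the claimed bound $N_{q,2}(G;z)\geq c_{0}\,\rho(z)^{-(2+Q/q)}$ for $\rho(z)\geq R_{0}$, with $c_{0}=\mu_{0}\,\delta_{*}^{Q/q}\,C_{1}^{-(2+Q/q)}>0$.

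To finish I would estimate the modular. Since $c_{0}^{p(z)}\geq\min\{c_{0}^{p_{-}},c_{0}^{p_{+}}\}=:c_{1}>0$, and since by hypothesis $(2+Q/q)\,p(z)\leq(2+Q/q)\,p_{+}\leq Q$, so that $\rho(z)^{-(2+Q/q)p(z)}\geq\rho(z)^{-Q}$ whenever $\rho(z)\geq1$, one gets
\[
\kappa_{p(\cdot)}\big(N_{q,2}(G;\cdot)\big)\;\geq\;\int_{\rho(z)\geq\max\{R_{0},1\}}c_{0}^{\,p(z)}\,\rho(z)^{-(2+Q/q)p(z)}\,dz\;\geq\;c_{1}\int_{\rho(z)\geq\max\{R_{0},1\}}\rho(z)^{-Q}\,dz\;=\;\infty,
\]
the last integral diverging because $|\{a\leq\rho<2a\}|=c\,a^{Q}(2^{Q}-1)$ turns its dyadic decomposition into a divergent series of equal positive terms. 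By Lemma \ref{modular}, $N_{q,2}(G;\cdot)\notin L^{p(\cdot)}(\mathbb{H}^{n})$, contradicting $G\in\mathcal{H}^{p(\cdot)}_{q,2}(\mathbb{H}^{n})$. Hence $G=0$.

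The step requiring the most care is the lower bound in the second paragraph: the crucial trick is to evaluate $\eta_{q,2}(g-P;z)$ at the \emph{single} radius $r_{z}\approx\rho(z)$, chosen large enough that the fixed ball $B_{*}$ sits inside $B(z,r_{z})$, because then the contribution of $B_{*}$ to the $L^{q}$-average is bounded below by $\|G\|_{q,B_{*}}$ \emph{uniformly in} $P\in\mathcal{P}_{1}$, which is what allows the estimate to survive the infimum defining $N_{q,2}$. The rest — bookkeeping with the quasi-triangle constant $\beta$ and the divergence of $\int_{\rho\geq1}\rho^{-Q}$ — is routine.
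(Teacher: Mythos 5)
Your proof is correct and follows essentially the same route as the paper's: locate a fixed ball on which the $E^q_1$-seminorm of $G$ is bounded below uniformly over representatives, test $\eta_{q,2}$ at a single radius comparable to $\rho(z)$ that swallows that ball to get $N_{q,2}(G;z)\gtrsim\rho(z)^{-(2+Q/q)}$ at infinity, and conclude that the modular diverges since $(2+Q/q)p_+\leq Q$. Your version is if anything slightly more careful than the paper's about the quasi-triangle constant and about separating $c_0^{p(z)}$ from $\rho(z)^{-(2+Q/q)p(z)}$ in the modular estimate.
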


\begin{proof} Let $G \in \mathcal{H}^{p(\cdot)}_{q, \, 2}(\mathbb{H}^{n})$ and assume $G \neq 0$. Then there exists 
$g \in G$ that is not a polynomial of homogeneous degree less or equal to $1$. It is easy to check that there exist a positive constant $c$ and a $\rho-$ball $B = B(e,r)$ with $r>1$ such that
\[
\int_{B} |g(w) - P(w)|^{q} \, dw \geq c > 0,
\]
for every $P \in \mathcal{P}_{1}$.

Let $z$ be a point such that $\rho(z) > r$ and let $\delta = 2 \rho(z)$. Then $B(e, r) \subset B(z, \delta)$. If $h \in G$, then 
$h = g - P$ for some $P \in \mathcal{P}_{1}$ and
\[
\delta^{-2}|h|_{q, B(z, \delta)} \geq c \rho(z)^{-2-Q/q}.
\]
So $N_{q,2}(G; \, z) \geq c \, \rho(z)^{-2-Q/q}$, for $\rho(z) > r$. Since $p_{+} \leq Q(2+Q/q)^{-1}$, we have that
\[
\kappa_{p(\cdot)} \left( N_{q,2m}(G; \cdot) \right) \geq c \, \int_{\rho(z) > r} \rho(z)^{-(2+Q/q)p_{+}} \, dz = \infty,
\]
which gives a contradiction. Thus $\mathcal{H}^{p}_{q, 2}(\mathbb{H}^{n}) = \{0\}$, if $p_{+} \leq Q(2+Q/q)^{-1}$.
\end{proof}

\

Pablo Rocha, Instituto de Matem\'atica (INMABB), Departamento de Matem\'atica, Universidad Nacional del Sur (UNS)-CONICET, Bah\'ia Blanca, Argentina. \\
{\it e-mail:} pablo.rocha@uns.edu.ar

\end{document}